\newtheorem{defi}{Definition}[section]
\newtheorem{satz}[defi]{Theorem}
\newtheorem{prop}[defi]{Proposition}
\newtheorem{lemma}[defi]{Lemma}
\newtheorem{bem}[defi]{Remark}
\newtheorem{folg}[defi]{Corollary}
\newcommand {\R}{\mathbb{R}} 
\newcommand {\C}{\mathbb{C}}
\DeclareMathOperator{\vol}{vol}
\DeclareMathOperator{\grad}{grad}
\begin{document}
\title{uncertainty principles on Harmonic Manifolds of Rank One}

\author{Oliver Brammen}
\date{\today}
\address{Faculty of Mathematics,
Ruhr University Bochum, 44780 Bochum, Germany}
\email{oliver.brammen@rub.de}
\thanks{Funded by the Deutsche Forschungsgemeinschaft (DFG, German Research Foundation) – Project-ID 281071066 – TRR 191.}

\begin{abstract}
We show various uncertainty principles for the Fourier transform on harmonic manifolds of rank one. In particular, we derive a Heisenberg uncertainty principle, a Morgen theorem, an uncertainty principle for the Schrödinger equation and a version of Hömanders theorem. Furthermore, we generalise the in the Euclidean case well-known Hausdorff-Young inequality for the Fourier transform, to harmonic manifolds of rank one. 
\end{abstract}

\maketitle

\section{Introduction}
A classical result going back to the 1920s about the Fourier transform for $L^2$ functions on the real line is the Heisenberg uncertainty principle, for $f\in L^2(\R)$:
\begin{align*}
\int_{\R}x^2\lvert f(x)\rvert^2\,dx\cdot \int_{\R}\lambda^2\lvert \mathcal{F}(f)(\lambda)\rvert^2 \,d\lambda\geq \frac{1}{4}\lVert f\rVert_2^4,
\end{align*}
where $\mathcal{F}(f)(\lambda)=\frac{1}{\sqrt{2\pi}}\int_{\R} f(x)e^{-\lambda x}\,dx$. 
It states that a function and its Fourier transform can not be localised at the same time. In the ensuing decades, various other uncertainty principles were discovered, that pose bounds on the decay at infinity of a function and its Fourier transform (see \cite{surveyuncertenti} and the references within).
In the '90s and early 2000'es many of those were generalised to the Helgason Fourier transform on symmetric spaces and non-symmetric Damek-Ricci spaces. Among those are versions of the Hardy and Morgan theorem (\cite{uncertaintysym} \cite{bagchiray1998}).
This covers all known examples of non-compact harmonic manifolds. These are manifolds where the volume growth of a geodesic ball only depends on its radius. Starting with the work of Pyerimhoff and Samiou \cite{PS15} it became apparent that some of the analytic properties that are present in Damek-Ricci spaces due to their algebraic structure can be derived only using geometric means. The authors in \cite{Biswas2019} developed a Helgason Fourier transform on harmonic manifolds under the assumption of purely exponential volume growth. 
This together with the result from \cite{Heisenberg}, which gives a Heisenberg uncertainty principle for radial functions on harmonic manifolds of purely exponential volume growth, motivates the main part of this article dealing with the generalisation of various uncertainty principles together with some in the Euclidean case well know inequalities for the Fourier transform to harmonic manifolds with purely exponential volume growth. 
In section \ref{s7} we derive a Heisenberg uncertainty principle for the Fourier transform on harmonic manifolds with purely exponential volume growth, using the bounds on the $L^2$ norm of the heat kernel derived from the correspondence to hypergroups. 
Section \ref{s8} deals with generalisation of the Morgen theorem on rank one symmetric spaces to harmonic manifolds with pinched negative curvature using the Radon transform established in  \cite{Rouvire2021} (see section \ref{s5} for the definition and some important properties) and bounds on the radial eigenfunctions of the Laplace operator (section \ref{s4}) derived from the fact that the density function defines a hypergroup structure on the real line. 
Using an analogous approach, we derive an uncertainty principle for the Schrödinger equation on harmonic manifolds of pinched negative curvature in section \ref{schrö}. 

Furthermore,  in section \ref{HY}, we apply the bounds on the radial eigenfunctions of the Laplace operator and interpolation arguments to generalise the Hausdorff-Young inequality. Note that there are versions for symmetric spaces in \cite{interpolation2}. In section \ref{s10} we then show a version of the Heomander theorem for harmonic manifolds of purely exponential volume growth this generalises results from \cite{MR2679708}. 
\section{Preliminaries}\label{s1}
\subsection{Notation}
Throughout this article, we always assume the manifold $(X,g)$  to be a Riemannian,  complete, connected, without conjugated points and smooth. Furthermore, when not explicitly stated otherwise we will denote for a function $f:X\to \C$ by 
$\int_X f(x)\,dx$ the integral of a $f$ with respect to the volume form induced by the Riemannian metric $g$ on $X$. We will denote the space of $k$-times  differentiable functions on $X$ (with compact support) by $C^{k}(X)$ ($C^k_c(X)$) and the $L^p$-spaces with regards to the volume form by $L^p(X)$, equipped with the topology induced by the norm. We will supply the measure as the second argument in the bracket when deemed necessary. 
\subsection{Geometric Property of  Harmonic Manifolds}
In this section, we recall some facts needed in the proceeding. The main source of reference for those are the articles \cite{Biswas2019} and \cite{kreyssig2010introduction}\cite{knieper2013noncompact2}. A Riemannian manifold $(X,g)$ is called harmonic if for every point $\sigma\in X$ there exists a non-trivial radial, i.e. only on the geodesic distance $d(\sigma,x)$ dependent, solution of the Laplace equation 
\begin{align*}
f:X\to\R\\
\Delta f=0,
\end{align*}
where $\Delta=\operatorname{div}\operatorname{grad}$ is the Laplacian on $X$.  By Allamigeo's theorem a simply connected harmonic manifold has no conjugated points, hence by the Hadamard Cartan theorem the exponential map $\operatorname{exp}: T_{\sigma} X\to X$ is a diffeomorphisem for every $\sigma\in X$. An equivalent definition of $X$ being harmonic is that the density function $\sqrt{g_{ij}}$ is radial in geodesic coordinates, and therefore in this coordinate, we can see the density function $A(r)$ as the Jacobian of the map sending $v\in S_{\sigma}X$ to $\operatorname{exp}(rv)$. Now $X$ is said to be of purely exponential volume growth if there exists some constant $C\geq1$ and $\rho>0$ such that:
\begin{align}\label{eq:exponential}
\frac{1}{C}\leq \frac{A(r)}{e^{2\rho r}}\leq C.
\end{align}
This property is by \cite{knieper2009new} equivalent to
\begin{itemize}
\item Anosov Geodesic Flow
\item Gromov Hyperbolicity 
\item Rank one.
\end{itemize}
Where the rank of a manifold without conjugated points is defined in \cite{knieper2009new} which is a generalisation of the well-known notion of rank for manifolds of non-positive curvature.
It was a long-standing conjecture coined the Lichnerowicz conjecture \cite{Lich} that all harmonic manifolds are either Euclidean spaces or symmetric spaces for rank one. 
 It was confirmed for simply connected spaces by Szabo \cite{szabo1990} but shortly after in 1992  Damek and Ricci \cite{Damek_1992} provided for dimension 7 and higher a class of homogeneous harmonic spaces that are non-symmetric and have purely exponential volume growth. In dimensions 5 and lower the conjecture was confirmed by \cite{Lich}, \cite {Walker}, \cite{Besse1978}, \cite{Nik} as well as for non-compact harmonic manifolds with negative curvature admitting a compact quotient \cite{Gallot1995}.  
Ranjan and Shah \cite{Ranjan2002} showed that among the non-compact harmonic spaces, only the ones without vanishing curvature of the horosphere are not flat. 
In 2006 Heber \cite{Heber_2006} settled the conjecture in the homogeneous case showing that a non-compact simply connected homogeneous harmonic manifold is either flat, rank one symmetric or a non-symmetric Damek-Ricci space. Using the equality above Knieper \cite{knieper2009new} showed that if a harmonic manifold of rank one admits a compact quotient it is already a symmetric space of rank one. 

Let $(X,g)$ be a non-compact simply connected harmonic manifold of rank one and consider the geodesic boundary $\partial X$ consisting of equivalence classes of geodesic rays, where two rays are equivalent if their distance is bounded. We equip $\partial X$ with the cone topology. Note that this makes $\overline{X}=X\cup\partial X$ a compact space. Furthermore the Busemann function  of a geodesic ray $\gamma_v$, $v\in S_{\sigma}X$ 
\begin{align*}
b_v(x):=\lim_{t\to\infty}d(\gamma_v(t),x)-t.
\end{align*}
depends only on the equivalence class of the ray.  Hence 
for $x\in X$ and $\xi\in\partial X$  $\gamma\in \xi$ we can alternatively define the  Busemann function $B_{\xi,x}:X\to\R$ by $$B_{\xi,x}(y)=\lim_{t\to\infty}(d(y,\gamma(t))-d(x,\gamma(t)),$$ 
and we have the cocycle property 
$$B_{\xi,x}=B_{\xi,\sigma}-B_{\xi,\sigma}(x) \forall x,\sigma\in X.$$ 
For a detailed account of this see \cite{Biswas2019}.
By \cite{BusemannHarmonic} we therefore have: 
$\Delta B_{\xi,\sigma} =2\rho$, where $2\rho$ is the mean curvature of the horospheres, which is constant on $X$. 
Furthermore the authors in \cite{Knieper2016} obtained a family of probability measures $\{\mu_x\}_{x\in X}$ on the geometric boundary, which are pairwise absolutely continuous with Radon-Nikodym derivative  $\frac{d\mu_x}{d\mu_y}(\xi)=e^{-2\rho B_{\xi,x}(y)}$.

\subsection{Fourier Transform and Plancherel Theorem on Rank One Harmonic Manifolds}
Consider a $C^2$ function $f$  on $X$  and $u$ a $C^{\infty}$ function on $\R$. Then the authors in \cite{Biswas2019} calculated:
$$\Delta (u\circ f)=(u''\circ f)\lVert \operatorname{grad} f\rVert^2 +(u'\circ f)\Delta f.$$

This yields the spherical and horospherical part of the Laplacian,

\begin{align}\label{equ:radiallaplace}
\Delta(u\circ d_x)=u^{\prime\prime}\circ d_x +u^{\prime}\circ d_x\cdot \frac{A^{\prime}}{A}\circ d_x.
\end{align}

\begin{align}
\Delta( u\circ b_v)=u^{\prime\prime}\circ b_v+2\rho\cdot u^{\prime}\circ b_v.
\end{align}
Therefore we have:
 $g(y)=e^{(i\lambda-\rho)B_{\xi, x}(y)}$ is a eigenfunction of the Laplacian with $g(x)=1$ and $\Delta g= -(\lambda^2+\rho^2)$ for $\lambda\in \C$.
Using this as a starting point authors in \cite{Biswas2019} derived a Helgason Fourier transform and its inverse in the case of rank one via a careful analysis of the operator $L_A$ given by the radial part of the Laplacian: 
The operator 
\begin{align}\label{eq:A}
 L_{A}:=\frac{d^2}{dr^2}+\frac{A^{\prime}(r)}{A(r)}\frac{d}{dr}
\end{align}
is the radial part of the Laplacian on $X$. 
Let
\begin{align}
\varphi_{\lambda}:\R^+\to\R, \quad\lambda\in [0,\infty)\cup [0,i\rho]
\end{align}
be the eigenfunction of $L_A$ with
\begin{align}
L_{A}\varphi_{\lambda}=-(\lambda^2+\rho^2)\varphi_\lambda
\end{align}
and which admits a smooth extension to zero with $\varphi_{\lambda}(0)=1$. Obviously we have that for $\sigma\in X$  $\varphi_{\lambda,\sigma}=\varphi_{\lambda}\circ d_{\sigma}$ is a radial eigenfunction of $\Delta$ with eigenvalue $-(\lambda^2+\rho^2)$.
In \cite{PS15} the authors 
defined the radial Fourier transform on general non-compact harmonic manifolds: 
\begin{defi}
Let $f:X\to\R$ be, i.e. $f=u\circ d_x$ for some $x\in X$, where $u:[0,\infty)\to\R$ and $d_x:X\to \R$ is the distance function. The Fourier transform of $f$ is given by: 
\begin{align*}
\widehat{f}(\lambda)&:=\int_X f(y)\varphi_{\lambda,x}(y)\,dy\\
&=\omega_{n-1}\int_0^{\infty}u(r)\varphi_{\lambda}(r)A(r)\,dr.
\end{align*}
where $\omega_{n-1}:=\vol(S^{n-1}).$
\end{defi}
In the case of rank one the authors in \cite{Biswas2019} defined the Helgason Fourier transform by: 
\begin{defi}
		Let $\sigma\in X$ for $f:X\to \C$ measurable, the Fourier transform of $f$ based at $\sigma$ is given by 		
		$$\tilde{f}^{\sigma}(\lambda,\xi)=\int_{X}f(y)e^{(-i\lambda-\rho)B_{\xi,\sigma}(y)}\,dy$$
		for $\lambda\in\C$, $\xi\in\partial X$ for which the integral above converges and  $B_{\xi,\sigma}$ the Busemann function in direction  $\xi$ based at $\sigma$, i.e.
			$$B_{\xi,\sigma}(\sigma)=0.$$
\end{defi}
\begin{bem} The authors in \cite{Biswas2019} showed that 
    \begin{enumerate}
        \item The radial eigenfunctions have the representation \begin{align}\label{radialeigen}
			\varphi_{\lambda,x}(y):=\int_{\partial X}e^{(i\lambda-\rho)B_{\xi,x}(y)}\,d\mu_x (\xi).
   \end{align}
   \item Using (\ref{radialeigen}) on obtains that  $\hat{f}=\widetilde{f}^{\sigma}$ if $f$ is radial around $\sigma$.
   \item For $x,\sigma\in X$ we have by the cocycle property of the Busemann function: 
   \begin{align}\label{eq:fourpoint}
			\tilde{f}^x (\lambda,\xi)=e^{(i\lambda+\rho)B_{\xi,\sigma}(x)}\tilde{f}^{\sigma}(\lambda,\xi).
			\end{align}
    \end{enumerate}
\end{bem}

Under the condition of rank one we have for two linear independent solutions to $L_{A,x}u=-(\lambda^2+\rho^2)$ 
$\Phi_{\lambda}$ and $\Phi_{-\lambda}$ which are asymptotic to exponential functions ea. 
\begin{align}
\Phi_{\pm \lambda}(r)= e^{(\pm i\lambda-\rho)r}(1+o(1))\text{ as }r\to\infty
\end{align}
that
\begin{align}\label{eigende}
\varphi_{\lambda}=\mathbf{c}(\lambda)\Phi_{\lambda}+\mathbf{c}(-\lambda)\Phi_{-\lambda}\quad\forall \lambda\in \C\setminus \{0\}.
\end{align}
Where $\mathbf{c}:\C\to\C$ is a function holomorph on the lower have plane with a continuous extension to the real line, corresponding to the Harish-Chandra’s c-function in the symmetric case.

Using results form  \cite{Bloom1995TheHM}  the authors in \cite{Biswas2019} showed that there is a constant $C_0$ such that

			\begin{align}
			f(x)=C_0 \int_{0}^{\infty}\int_{\partial X}\tilde{f}^{\sigma}(\lambda,\xi)e^{(i\lambda-\rho)B_{\xi,\sigma}(x)}\,d\mu_{\sigma}(\xi)
			\vert \mathbf{c}(\lambda)\vert^{-2}\,d\lambda.
			\end{align}
	
		Additionally, they obtained  a Plancherel theorem:

\begin{satz}[\cite{Biswas2019}]\label{Placherel Theorem}
		Let $\sigma\in X$, $f,g\in C_{c}^{\infty}(X)$. Then we have:
			$$\int_X f(x)\overline{g(x)}\,dx=C_{0}\int_{0}^{\infty}\int_{\partial X}\tilde{f}^{\sigma}(\lambda,\xi)\overline{\tilde{g}
			^{\sigma}(\lambda,\xi)} \vert \mathbf{c}(\lambda)\vert^{-2}\,d\mu_{\sigma}(\xi)d\lambda 
			$$ 
		and the Fourier transform extends to an isometry between $L^2(X)$ and\\
		 $L^2((0,\infty)\times \partial X,C_0 \vert \mathbf{c}(\lambda)\vert^{-2} \,d\mu_{\sigma}(\xi)\,d\lambda).$
\end{satz}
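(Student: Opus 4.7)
The plan is to deduce the Plancherel identity directly from the inversion formula by a substitute-and-swap calculation, and then extend the resulting isometry to all of $L^2(X)$ by a density argument. This mirrors how the classical Plancherel theorem for the Helgason-Fourier transform on rank-one symmetric spaces is usually derived.

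First I would take $f,g\in C_{c}^{\infty}(X)$, substitute the inversion formula for $f(x)$ into $\int_X f(x)\overline{g(x)}\,dx$ and interchange the order of integration to obtain
\begin{align*}
\int_X f(x)\overline{g(x)}\,dx=C_0\int_0^{\infty}\!\int_{\partial X}\tilde{f}^{\sigma}(\lambda,\xi)\!\left(\int_X e^{(i\lambda-\rho)B_{\xi,\sigma}(x)}\overline{g(x)}\,dx\right)\!d\mu_{\sigma}(\xi)|\mathbf{c}(\lambda)|^{-2}d\lambda.
\end{align*}
To justify Fubini I would use that $g$ has compact support (which bounds $|e^{(i\lambda-\rho)B_{\xi,\sigma}(x)}|=e^{-\rho B_{\xi,\sigma}(x)}$ uniformly in $\xi$ on $\operatorname{supp} g$), that $\mu_{\sigma}$ is a probability measure, and rapid decay of $\tilde{f}^{\sigma}(\cdot,\xi)$ in $\lambda$, which follows from the eigenvalue relation $\widetilde{((-\Delta)^k f)}^{\sigma}(\lambda,\xi)=(\lambda^2+\rho^2)^k\tilde{f}^{\sigma}(\lambda,\xi)$ together with the polynomial growth of $|\mathbf{c}(\lambda)|^{-2}$. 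Since $B_{\xi,\sigma}$ is real-valued, for real $\lambda$ the inner $x$-integral equals $\overline{\tilde{g}^{\sigma}(\lambda,\xi)}$, and the claimed identity drops out.

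Setting $f=g$ yields $\|f\|_{2}=\|\tilde{f}^{\sigma}\|_{L^{2}((0,\infty)\times\partial X,\,C_0|\mathbf{c}(\lambda)|^{-2}d\mu_{\sigma}\,d\lambda)}$, and density of $C_{c}^{\infty}(X)$ in $L^{2}(X)$ extends the Fourier transform to a linear isometry. For surjectivity, which is the subtler point, I would reduce to the radial case via the representation (\ref{radialeigen}): the radial Fourier transform coincides with a Jacobi-type transform on $(\R^{+},A(r)\,dr)$, for which \cite{Bloom1995TheHM} already asserts that the image is all of $L^{2}((0,\infty),|\mathbf{c}(\lambda)|^{-2}d\lambda)$. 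One then approximates a general element of the target space by finite sums of products of a radial factor and a boundary factor (using that $\mu_{\sigma}$ makes $L^{2}(\partial X,\mu_{\sigma})$ a unitary tensor factor), and concludes density of the image.

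The main obstacle is precisely this last density step: harmonic manifolds of rank one lack the representation-theoretic structure (Iwasawa decomposition, principal series) available in the symmetric or Damek-Ricci setting, so one is forced to route surjectivity through the radial Jacobi analysis and an approximation argument on the boundary. The algebraic manipulation producing the Plancherel identity itself is essentially formal once the inversion formula, the cocycle relation (\ref{eq:fourpoint}), and the reality of the Busemann function are in hand.
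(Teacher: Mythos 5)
This theorem is imported: the paper states it with the citation \cite{Biswas2019} and offers no proof of its own, so there is no in-paper argument to compare yours against. Judged on its own terms, the first half of your proposal is sound and is the standard route: substituting the inversion formula into $\int_X f\overline{g}$, the Fubini justification you give is the right one (compact support of $g$ controls $e^{-\rho B_{\xi,\sigma}}$ uniformly in $\xi$, the eigenvalue relation obtained from Green's formula gives decay of $\tilde{f}^{\sigma}(\lambda,\xi)$ in $\lambda$ of arbitrary polynomial order, uniformly in $\xi$, which beats the polynomial growth of $\vert\mathbf{c}(\lambda)\vert^{-2}$), and for real $\lambda$ the inner integral is indeed $\overline{\tilde{g}^{\sigma}(\lambda,\xi)}$ because the Busemann function is real. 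The cocycle relation you cite is not actually needed anywhere in this computation. Polarization plus density of $C_c^{\infty}(X)$ then gives the isometric extension into the target space.

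The genuine gap is the surjectivity paragraph. The step ``one then approximates a general element of the target space by finite sums of products of a radial factor and a boundary factor'' is precisely the assertion that needs proof, and nothing in your sketch supplies it: to realize $h(\lambda)k(\xi)$ as a Fourier transform one must exhibit a function on $X$ --- essentially a Poisson-type integral $x\mapsto\int_{\partial X}e^{(i\lambda-\rho)B_{\xi,\sigma}(x)}k(\xi)\,d\mu_{\sigma}(\xi)$ smoothed by a radial convolution --- show it lies in $L^2(X)$, and compute its Fourier transform; the radial Jacobi/Ch\'ebli--Trim\`eche analysis from \cite{Bloom1995TheHM} only handles the $k\equiv 1$ component and says nothing about the boundary direction. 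This is exactly where the symmetric-space proofs use the principal series, and the tensor-factor language does not substitute for that input. Note also that everything this paper actually uses from the theorem is the isometry-into statement together with the inversion formula, so if surjectivity cannot be extracted from \cite{Biswas2019} directly, the honest fix is to prove only the isometric embedding and cite the source for anything stronger.
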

Since for $f\in L^p(X)$ and $1\leq p<2$ it is not clear whether the Fourier transform exists we need the following lemma. 
\begin{lemma} \label{lemma:holo2}
Let $f\in L^{p}(X)$, $1\leq p\leq 2$, then $\widetilde{f}^{\sigma}(\lambda,\xi)$ exists for every $\lambda\in \C$ with $\operatorname{im}\lambda\in (-\gamma_{q}\rho,\gamma_{q}\rho)$ where $\frac{1}{q}+\frac{1}{p}=1$ and $\gamma_{q}=1-\frac{2}{q}$. Moreover $\widetilde{f}^{\sigma}$ is holomorphic on the stripe $\operatorname{im}\lambda\in (-\gamma_{q}\rho,\gamma_{q}\rho). $
\end{lemma}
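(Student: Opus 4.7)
The plan is to reduce the existence of $\tilde f^\sigma(\lambda,\xi)$ to an $L^q$-bound on the radial eigenfunction $\varphi_{-is}$ (where $s=\operatorname{Im}\lambda$) via a Fubini--Hölder argument, and then to establish holomorphy via Morera's theorem.

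First, observe that $|e^{(-i\lambda-\rho)B_{\xi,\sigma}(y)}|=e^{(s-\rho)B_{\xi,\sigma}(y)}$ depends on $\lambda$ only through $s=\operatorname{Im}\lambda$. Applying Tonelli together with the integral representation (\ref{radialeigen}) at spectral parameter $-is$ gives
\begin{align*}
\int_{\partial X}|\tilde f^\sigma(\lambda,\xi)|\,d\mu_\sigma(\xi)
\leq \int_X|f(y)|\,\varphi_{-is}(d(\sigma,y))\,dy
\leq \|f\|_p\cdot\|\varphi_{-is}\circ d_\sigma\|_q,
\end{align*}
where the last step is Hölder's inequality (with the $L^\infty$ norm when $p=1$). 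It therefore suffices to show $\varphi_{-is}\circ d_\sigma\in L^q(X)$ for $|s|<\gamma_q\rho$; a posteriori this guarantees that $\tilde f^\sigma(\lambda,\cdot)\in L^1(\partial X,\mu_\sigma)$, and hence $\tilde f^\sigma(\lambda,\xi)$ exists for $\mu_\sigma$-a.e.~$\xi$.

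For the $L^q$ estimate, I would combine the Harish--Chandra-type decomposition (\ref{eigende}) with the asymptotics $\Phi_{\pm\lambda}(r)=e^{(\pm i\lambda-\rho)r}(1+o(1))$ to obtain the pointwise bound $|\varphi_{-is}(r)|\leq C_s\,e^{(|s|-\rho)r}$ for $r\geq 1$, with $C_s$ depending locally uniformly on $s$; on $[0,1]$, continuity of $\varphi_{-is}$ gives boundedness. Together with $A(r)\leq Ce^{2\rho r}$ from (\ref{eq:exponential}), this yields
\begin{align*}
\int_0^\infty|\varphi_{-is}(r)|^q A(r)\,dr \leq C'+C''\int_1^\infty e^{(q|s|+2\rho-q\rho)r}\,dr<\infty
\end{align*}
precisely when $|s|<\rho(1-2/q)=\gamma_q\rho$. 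The endpoint $p=1$, $q=\infty$ is handled analogously: the same asymptotics give $\varphi_{-is}\in L^\infty([0,\infty))$ for $|s|<\rho$.

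To prove holomorphy on the strip, invoke Morera's theorem. The integrand $f(y)e^{(-i\lambda-\rho)B_{\xi,\sigma}(y)}$ is entire in $\lambda$, and on any compact $K$ in the strip, setting $s_0:=\sup_{\lambda\in K}|\operatorname{Im}\lambda|<\gamma_q\rho$, the $L^q$ estimate applied with $s=s_0$ provides a uniform (in $\lambda\in K$) $L^1$-dominating function on $X$. Hence Fubini permits the interchange of any closed-contour integral $\oint_\Gamma d\lambda$ with $\int_X dy$; the inner $\lambda$-integral then vanishes by Cauchy's theorem, proving holomorphy at every $\xi$ for which $\tilde f^\sigma(\lambda,\xi)$ exists throughout the strip (a full-measure set). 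The main obstacle is upgrading the formal asymptotic formula for $\Phi_{\pm\lambda}$ into a pointwise bound on $\varphi_{-is}(r)$ valid for every $r\geq 0$ with constants locally uniform in $s$: this single estimate is what controls both the $L^q$-norm and the dominated-convergence step behind Morera. The required uniformity can be extracted by analysing the remainder in the asymptotic expansion of $\Phi_{\pm\lambda}$ via a perturbation of the ODE (\ref{eq:A}), together with the continuity of $\mathbf c$ on the real axis.
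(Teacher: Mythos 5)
Your proposal follows the same overall strategy as the paper: bound $\lvert\tilde f^\sigma(\lambda,\xi)\rvert$ by H\"older against the $L^q$-norm of the radial eigenfunction $\varphi_{i\operatorname{Im}\lambda,\sigma}$, then get holomorphy from Morera. The paper's own proof is exactly the two-line estimate $\lvert\tilde f^\sigma(\lambda,\xi)\rvert\le\lVert f\rVert_p\,\lVert\varphi_{it,\sigma}\rVert_q$ with the finiteness of $\lVert\varphi_{it,\sigma}\rVert_q$ for $\lvert t\rvert<\gamma_q\rho$ cited from \cite[Lemma 8.1]{Biswas2019}, followed by ``hence Morera''. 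Your one genuine deviation is the preliminary integration over $\partial X$ via (\ref{radialeigen}), and this is in fact the more careful route: for a fixed $\xi$ the weight $e^{(\operatorname{Im}\lambda-\rho)B_{\xi,\sigma}}$ does \emph{not} lie in $L^q(X)$ (horospheres have infinite volume, so no nonconstant function of $B_{\xi,\sigma}$ alone is $q$-integrable over $X$), so the eigenfunction can only enter after averaging over the boundary. The price, which you correctly note, is that existence is obtained only for $\mu_\sigma$-a.e.\ $\xi$ — which is all one should expect.

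The incomplete step is the $L^q$ bound on $\varphi_{-is}\circ d_\sigma$, which you propose to extract from the expansion (\ref{eigende}) and the asymptotics of $\Phi_{\pm\lambda}$. That route has real obstructions beyond the uniformity issue you flag: $\mathbf{c}$ is only given as holomorphic on the lower half-plane with continuous extension to $\R$, so $\mathbf{c}(is)$ for $s>0$ is not obviously defined, and the expansion degenerates at $\lambda=0$. None of this is needed. The pointwise bound you want, $\varphi_{i\tau,\sigma}(x)\le k\,(1+d_\sigma(x))\,e^{(\lvert\tau\rvert-\rho)d_\sigma(x)}$, is exactly Lemma \ref{lemma:bounds} (imported from the Ch\'ebli--Trim\`eche hypergroup literature), and together with the volume bound (\ref{eq:exponential}) it gives $\lVert\varphi_{i\tau,\sigma}\rVert_q<\infty$ precisely for $\lvert\tau\rvert<\gamma_q\rho$ (and $\varphi_{i\tau,\sigma}\le 1$ for the endpoint $q=\infty$); this is the content of the lemma of Biswas et al.\ that the paper cites. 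Substituting that reference for your ODE-perturbation sketch closes the argument, and your Morera step then goes through as written.
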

\begin{proof}
\begin{align*}
\left \lvert \int_{X}f(x)e^{-(i\lambda +\rho)B_{\xi,\sigma}(x)}\,dx\right \rvert &\leq \int_{X}\lvert f(x)\rvert \lvert e^{-(i\lambda+\rho)B_{\xi,\sigma}(x)}\rvert \,dx\\
&\leq \lVert f\rVert_{p}\cdot \lVert \varphi_{it,\sigma}\rVert_{q}<\infty,
\end{align*}
where the last line is due to \cite[Lemma 8.1]{Biswas2019}. Hence the claim followed by Morera's theorem.
\end{proof}
\begin{bem}
    Note that by the results in  \cite{Biswas2019} $L_A$ generates a special hypergroup structure on $(0,\infty)$ called Ch\'ebli-Trim\`eche hypergroup. For a detailed introduction to the topic of hypergroups see \cite{bloom2011harmonic}.
    \end{bem}

\section{Convolution}\label{s3}
Assume $X$ to be a non-compact simply connected harmonic manifold of rank one. 
 Let $f$ be radial around $\sigma\in X$ with $f=u\circ d_{\sigma}$ for some function $u:\R\to \R$. For $x\in X$ define the $x$-translate of $f$ by 
\begin{align*}
\tau_x f :=u\circ d_x.
\end{align*}
Observe that 

\begin{enumerate}
\item $\tau_x f(y)=u\circ d(x,y)=\tau_{y}f(x)$.
\item If $f\in L^p (X)$ $1\leq p\leq \infty $ is radial  we have $\lVert f \rVert_p=\lVert \tau_x f \rVert_p$.

\end{enumerate}

\begin{defi}
Let $g:X\to \C$ be a radial function  around $\sigma\in X$, with $g=u\circ d_\sigma$ for some even function $u:\R\to \C$.
 Then the convolution of a function $f:X\to \C$ with compact support with $g$
   is given by $$ (f*g)(x):=\int_{X} f(y)\cdot\tau_x g(y)\,dy.$$
\end{defi}
\begin{lemma}
    Let $f\in L^p(X)$ $1\leq p\leq \infty$ and $g \in L^1(X)$ radial hence $g=u\circ d_{\sigma}$ for some even $u:\R\to \C$. Then $ f*g \in L^p(X)$.
\end{lemma}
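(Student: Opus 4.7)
The plan is to establish the Young-type inequality $\lVert f*g\rVert_p\leq \lVert f\rVert_p\lVert g\rVert_1$, which in particular shows $f*g\in L^p(X)$. The three ingredients are the symmetry $\tau_x g(y)=\tau_y g(x)$ coming from observation (1) (both sides equal $u(d(x,y))$ since $u$ is even), the translation invariance $\lVert \tau_y g\rVert_1=\lVert g\rVert_1$ coming from observation (2) (which uses that $g$ is radial), and Fubini--Tonelli.

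First I would dispose of the endpoints. For $p=\infty$ the estimate
\begin{align*}
\lvert (f*g)(x)\rvert\leq \int_X \lvert f(y)\rvert\,\lvert \tau_x g(y)\rvert\,dy\leq \lVert f\rVert_\infty \lVert \tau_x g\rVert_1=\lVert f\rVert_\infty \lVert g\rVert_1
\end{align*}
is immediate. For $p=1$, Tonelli combined with the symmetry gives
\begin{align*}
\int_X\!\!\int_X \lvert f(y)\rvert\,\lvert \tau_x g(y)\rvert\,dy\,dx=\int_X \lvert f(y)\rvert\,\lVert \tau_y g\rVert_1\,dy=\lVert f\rVert_1\lVert g\rVert_1.
\end{align*}

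For $1<p<\infty$ I would apply the standard Hölder trick: factor
$\lvert f(y)\tau_x g(y)\rvert=\lvert f(y)\rvert\,\lvert \tau_x g(y)\rvert^{1/p}\cdot\lvert \tau_x g(y)\rvert^{1/p'}$
with $\tfrac{1}{p}+\tfrac{1}{p'}=1$ and apply Hölder in $y$ to obtain the pointwise bound
\begin{align*}
\lvert (f*g)(x)\rvert^{p}\leq \lVert g\rVert_1^{p/p'}\int_X \lvert f(y)\rvert^{p}\,\lvert \tau_x g(y)\rvert\,dy.
\end{align*}
Integrating in $x$, exchanging the order of integration via Tonelli, and using $\tau_x g(y)=\tau_y g(x)$ together with $\lVert \tau_y g\rVert_1=\lVert g\rVert_1$ then produces $\lVert f*g\rVert_p^{p}\leq \lVert g\rVert_1^{p}\lVert f\rVert_p^{p}$, as desired.

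The only bookkeeping point I anticipate is that the definition of $f*g$ given just above requires $f$ to have compact support, so for general $f\in L^p(X)$ the pointwise integral may not converge a priori. I would therefore first run the argument for $f\in C_c(X)\cap L^p(X)$, which is dense in $L^p(X)$ for $1\leq p<\infty$, and then extend the operator $f\mapsto f*g$ to all of $L^p(X)$ by continuity using the inequality just proved; the Tonelli step along the way also shows that $y\mapsto f(y)\tau_x g(y)$ is integrable for a.e.\ $x$, so the pointwise formula remains valid almost everywhere. For $p=\infty$ no extension is needed. I do not expect any substantive obstacle beyond this bookkeeping.
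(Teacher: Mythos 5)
Your proof is correct, and it diverges from the paper's at exactly one point: the case $1<p<\infty$. The paper establishes the two endpoint bounds $\lVert f*g\rVert_1\leq\lVert f\rVert_1\lVert g\rVert_1$ and $\lVert f*g\rVert_\infty\leq\lVert f\rVert_\infty\lVert g\rVert_1$ (essentially by the same Tonelli-plus-symmetry computation you give) and then invokes the Riesz--Thorin interpolation theorem to cover the intermediate exponents. You instead run the classical direct proof of Young's inequality with an $L^1$ kernel, factoring $\lvert\tau_x g(y)\rvert=\lvert\tau_x g(y)\rvert^{1/p}\lvert\tau_x g(y)\rvert^{1/p'}$ and applying H\"older; your computation is correct, since the only manifold-specific inputs are $\tau_x g(y)=\tau_y g(x)$ and $\lVert\tau_y g\rVert_1=\lVert g\rVert_1$, both of which the paper records as consequences of $g$ being radial. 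The trade-off: your argument is elementary and self-contained, and the Tonelli step yields directly that the defining integral converges for almost every $x$ when $f\in L^p(X)$, which quietly resolves the issue that the paper's definition of $f*g$ is stated only for compactly supported $f$ (the paper's proof also applies the formula to general $f\in L^1$ or $L^\infty$ without comment, so your bookkeeping remark is a genuine, if minor, improvement in rigor). The paper's route is shorter on the page but outsources the interpolation to Riesz--Thorin; note that to apply Riesz--Thorin one should regard $f\mapsto f*g$ as a linear operator with $g$ fixed, which is legitimate here and is how the paper uses it. Both approaches yield the same inequality $\lVert f*g\rVert_p\leq\lVert f\rVert_p\lVert g\rVert_1$.
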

\begin{proof}
First suppose $f\in L^1(X)$ then:
\begin{align*}
\lVert f*g\rVert_1&\leq \int_X\int_X\lvert f(y)\rvert\lvert(\tau_xg)(y)\rvert \,dy\,dx\\
&=\int_X\lvert f(y)\rvert\Bigl(\int_{0}^{\infty}\lvert u(r)\rvert A(r) \,dr\Bigr) \,dy\\
&=\lVert f\rVert_1\lVert g\rVert_1< \infty. 
\end{align*}
Now replace  $f\in L^1(X)$ by $f\in L^{\infty}(X)$ in the calculation above we obtain:
\begin{align*}
\lVert f*g\rVert_{\infty}\leq \lVert f\rVert_{\infty}\cdot \lVert g\rVert_1.
\end{align*}
Hence, by the Rize-Thorin theorem, we get for 

 for all $ p\in[1,\infty]$ and $f\in L^p(X),$
\begin{align}\label{eq:conalg}
\lVert f*g\rVert_p\leq \lVert f\rVert_{p}\cdot\lVert g\rVert_1.
\end{align}
\end{proof}

\begin{bem}\label{Bem2.4.5}
\begin{enumerate}
    \item Let $f:X\to\C$, $g:X\to\C$ be smooth with compact support and $g$ radial around $\sigma\in X$. Then by \cite{Biswas2019} we have:
$$\widetilde{f*g}^{\sigma}=\tilde{f}^\sigma\cdot \widehat{g}.$$
Furthermore, the statement holds almost everywhere if the Fourier transforms exist almost everywhere. 

\item By the Plancherel theorem for the Fourier transform we get
$$ \lVert f*g\rVert_2 =\lVert \tilde{f}^\sigma\cdot \hat{g}\rVert_2.$$
for suitable functions $f,g$. 
\item 
If $f,g\in L^1(X)$ are  radial around $\sigma\in X$ then  $ f*g=g*f\in L^1(X)$ and the convolution  is radial around $\sigma$.
\item  The radial  $L^1$-functions around a point form a commutative Banach algebra under convolution \cite[Theorem 7.2]{Biswas2019}. Note that this also can be shown without the assumption of purely exponential volume growth as done in \cite{PS15}.
\end{enumerate}
\end{bem}

The following Lemma is a Young-type inequality for the convolution on harmonic manifolds which will be needed in the later discussion.  Note that this also holds without the assumption of rank one. The proof can be found in \cite{brammen2023dynamics}.
\begin{lemma}\label{young}
Let $p,q,r>0$ and satisfy: $1+\frac{1}{r}=\frac{1}{p}+\frac{1}{q}$. Then for $f\in L^p (X)$ and $g\in L^q(X)$, $g=u\circ d_\sigma$ radial around $\sigma\in X$ we get:
\begin{align*}
&f*g\in L^r(X)\\
&\text{ and }\\
&\Vert f*g\Vert_{r}\leq \Vert f\Vert_p \cdot \Vert g \Vert_q.
\end{align*}

\end{lemma}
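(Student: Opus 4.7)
The plan is to mimic the classical Euclidean proof of Young's inequality via Riesz--Thorin interpolation, with $g \in L^q(X)$ fixed and the linear operator $T_g:f\mapsto f*g$ considered as acting on $f$. The two endpoint mappings one needs are
\begin{align*}
T_g : L^1(X) \to L^q(X), \qquad T_g : L^{q'}(X) \to L^\infty(X),
\end{align*}
both with operator norm bounded by $\lVert g\rVert_q$, where $\tfrac{1}{q}+\tfrac{1}{q'}=1$. Interpolation between these then immediately produces the intermediate mappings $T_g:L^p\to L^r$ with $1+\tfrac{1}{r}=\tfrac{1}{p}+\tfrac{1}{q}$ and norm at most $\lVert g\rVert_q$, which is the claimed inequality. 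As usual one first proves the bound for $f\in C_c(X)$ and then extends by density using the resulting uniform bound.

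For the first endpoint I would apply Minkowski's integral inequality to
\begin{align*}
(f*g)(x)=\int_X f(y)\,\tau_x g(y)\,dy,
\end{align*}
giving $\lVert f*g\rVert_q \leq \int_X \lvert f(y)\rvert\,\lVert \tau_x g(y)\rVert_{L^q_x}\,dy$. The key step is that, since $g=u\circ d_\sigma$ is radial, one has $\tau_x g(y)=u(d(x,y))=\tau_y g(x)$, so the inner $L^q_x$-norm equals $\lVert \tau_y g\rVert_q=\lVert g\rVert_q$ by property (2) of the radial translates listed at the start of Section \ref{s3}. This yields $\lVert f*g\rVert_q\leq \lVert f\rVert_1\,\lVert g\rVert_q$.

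For the second endpoint, Hölder's inequality applied pointwise in $x$ gives
\begin{align*}
\lvert (f*g)(x)\rvert \leq \lVert f\rVert_{q'}\,\lVert \tau_x g\rVert_q = \lVert f\rVert_{q'}\,\lVert g\rVert_q,
\end{align*}
again using that $\tau_x g$ is radial around $x$ with the same $L^q$-norm as $g$. Taking the supremum in $x$ produces $\lVert f*g\rVert_\infty\leq\lVert f\rVert_{q'}\,\lVert g\rVert_q$.

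With these two endpoint bounds in hand, the Riesz--Thorin theorem, applied to the linear operator $T_g$ with $\theta\in[0,1]$ such that $\tfrac{1}{p}=(1-\theta)+\tfrac{\theta}{q'}$ and $\tfrac{1}{r}=\tfrac{1-\theta}{q}$, yields the full Young inequality; a short algebraic check confirms that $\tfrac{1}{p}+\tfrac{1}{q}=1+\tfrac{1}{r}$ corresponds exactly to this interpolation parameter. The only point requiring genuine attention, and hence the main obstacle one must verify, is that the isometry property $\lVert \tau_x g\rVert_q=\lVert g\rVert_q$ is available for every $q\in[1,\infty]$ in our geometric setting; this is precisely where the assumption that $g$ is radial enters, and it is the only ingredient beyond the Euclidean proof template.
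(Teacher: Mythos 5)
Your argument is correct: the two endpoint bounds (Minkowski's integral inequality for $L^1\to L^q$ and H\"older for $L^{q'}\to L^\infty$) both reduce, via $\tau_x g(y)=\tau_y g(x)$, to the isometry $\lVert \tau_x g\rVert_q=\lVert g\rVert_q$ for radial $g$, and Riesz--Thorin then gives the stated inequality on the standard range $1\le p\le q'$. The paper itself defers the proof to a reference, but this is the same interpolation-plus-radial-translate-isometry strategy it uses for the special case $q=1$ immediately before the lemma, so you have found essentially the intended argument.
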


\section{Bounds on Eigenfunctions}\label{s4}
As in previous sections $\varphi_{\lambda,\sigma}$ denotes the radial eigenfunction of $\Delta$ with eigenvalue $-(\lambda^2+\rho^2)$ for $\lambda\in \C$ normalised at and radial around $\sigma\in X$. 
Since the radial eigenfunction of the Laplacian on $X$ coincides with the eigenfunctions of the hypergoupe associated with $A(r)$ we obtain from  \cite[Prop. 6.1.1 and Prop. 6.1.4]{trimeche2018generalized}
the following lemma.

\begin{lemma}\label{lemma:bounds}
For all $x\in X$ and $\lambda\in \C$ we have:
\begin{enumerate}
\item$ \lvert \varphi_{\lambda,\sigma}(x)\rvert\leq\varphi_{i\operatorname{Im}(\lambda),\sigma}(x)\leq \varphi_{0,\sigma}(x)\cdot e^{\lvert \operatorname{Im}(\lambda)\rvert d(\sigma,x)}$
\item $\lvert \operatorname{Im}(\lambda)\rvert \leq\rho \Rightarrow e^{(\lvert \operatorname{Im}(\lambda)\rvert -\rho)d(\sigma,x)}\leq \varphi_{\operatorname{Im}(\lambda),\sigma}(x)\leq 1$
\item $\lvert \operatorname{Im}(\lambda)\rvert \geq\rho \Rightarrow 1\leq \varphi_{i\operatorname{Im}(\lambda),\sigma}(x)\leq e^{(\lvert \operatorname{Im}(\lambda)\rvert -\rho)d(\sigma,x)}$
\end{enumerate}
and 
\begin{align*}
\varphi_{i\operatorname{Im}(\lambda),\sigma}(x)\leq k(1+d(\sigma,x))e^{(\lvert \operatorname{Im}(\lambda)\rvert -\rho)d(\sigma,x)}
\end{align*}
 for  some positive constant $k>0$. 
\end{lemma}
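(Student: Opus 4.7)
The statement is advertised as a direct consequence of Propositions 6.1.1 and 6.1.4 in \cite{trimeche2018generalized}, applied through the identification of the radial eigenfunctions $\varphi_{\lambda,\sigma}$ with the characters of the Ch\'ebli--Trim\`eche hypergroup generated by the density function $A(r)$. The plan is therefore twofold: first, verify that this identification is legitimate in the present geometric setting; second, sketch the self-contained integral-representation arguments that yield the easier items directly, so that the reader does not need to chase the hypergroup reference for every bound.

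For the identification I would note that $L_A$ in (\ref{eq:A}) has the exact form required by the Ch\'ebli--Trim\`eche framework, with $A(r)$ playing the role of the Haar weight; rank one together with the bound (\ref{eq:exponential}) supplies the growth and regularity of $A$ needed in \cite{trimeche2018generalized}, and the remark after Theorem~\ref{Placherel Theorem} pins down the matching of characters and radial eigenfunctions. Item (1) can then be derived directly: starting from (\ref{radialeigen}) and using $\lvert e^{(i\lambda-\rho)B_{\xi,\sigma}(x)}\rvert = e^{(-\operatorname{Im}(\lambda)-\rho)B_{\xi,\sigma}(x)}$, the triangle inequality on the integral over $\partial X$ gives $\lvert\varphi_{\lambda,\sigma}(x)\rvert\le\varphi_{i\operatorname{Im}(\lambda),\sigma}(x)$ after using the evenness $\varphi_{-\mu}=\varphi_{\mu}$. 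The second inequality in (1) then follows by pulling $e^{\lvert\operatorname{Im}(\lambda)\rvert d(\sigma,x)}$ out of the integrand, which is permitted by the elementary Busemann bound $B_{\xi,\sigma}(x)\ge -d(\sigma,x)$ obtained from the definition of $B_{\xi,\sigma}$ as a limit of distance differences and the reverse triangle inequality.

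For (2) and (3) the content is the positivity and monotonicity of $t\mapsto\varphi_{it,\sigma}(x)$ on the imaginary axis, together with the normalisation $\varphi_{i\rho,\sigma}\equiv 1$ coming from $\Delta B_{\xi,\sigma}=2\rho$. The upper bound $\varphi_{i\operatorname{Im}(\lambda),\sigma}(x)\le 1$ for $\lvert\operatorname{Im}(\lambda)\rvert\le\rho$ can be extracted from (\ref{radialeigen}) by Jensen's inequality applied to the probability measure $\mu_\sigma$ and the concavity (in the relevant range) of the exponential function composed with $B_{\xi,\sigma}$; the complementary bounds in (3) follow by the symmetric argument combined with $B_{\xi,\sigma}\le d(\sigma,x)$. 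These are precisely the conclusions of Proposition 6.1.1 of \cite{trimeche2018generalized}, and I would simply cite that result rather than reprove the maximum-principle step.

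The refined estimate $\varphi_{i\operatorname{Im}(\lambda),\sigma}(x)\le k(1+d(\sigma,x))e^{(\lvert\operatorname{Im}(\lambda)\rvert-\rho)d(\sigma,x)}$ is the place where I expect the real work to sit, and it is what I would flag as the main obstacle for a fully self-contained presentation. The polynomial factor $1+d(\sigma,x)$ is the classical signature of the critical point $\lvert\operatorname{Im}(\lambda)\rvert=\rho$ where the two asymptotic solutions $\Phi_{\lambda}$ and $\Phi_{-\lambda}$ of (\ref{eigende}) coincide and the $\mathbf{c}$-function develops a zero, so that the Harish-Chandra-type expansion $\varphi_{\lambda}=\mathbf{c}(\lambda)\Phi_{\lambda}+\mathbf{c}(-\lambda)\Phi_{-\lambda}$ degenerates and must be replaced by an expansion incorporating a $\log$-type correction. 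Proposition 6.1.4 of \cite{trimeche2018generalized} handles exactly this degeneracy uniformly in the imaginary parameter; my recommendation is to quote that proposition for this refined bound and to spell out the integral-representation derivation only for item (1), which is what drives the applications in the later sections.
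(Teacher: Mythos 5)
Your proposal takes the same route as the paper, which offers no proof at all beyond the one-line justification that the radial eigenfunctions coincide with the characters of the Ch\'ebli--Trim\`eche hypergroup generated by $A(r)$, so that items (1)--(3) and the refined polynomial bound are quoted directly from Propositions 6.1.1 and 6.1.4 of \cite{trimeche2018generalized}; your identification step and your treatment of (1) via (\ref{radialeigen}) and $\lvert B_{\xi,\sigma}(x)\rvert\le d(\sigma,x)$ are correct and are a useful supplement. Two small slips in your optional sketches are worth noting, though they do not affect the proof since you defer the hard bounds to the citation anyway: the inequality $\varphi_{i\operatorname{Im}(\lambda),\sigma}\le 1$ for $\lvert\operatorname{Im}(\lambda)\rvert\le\rho$ comes from H\"older/log-convexity of $c\mapsto\int_{\partial X}e^{cB_{\xi,\sigma}}\,d\mu_\sigma$ interpolating between $c=0$ and $c=-2\rho$ (where the integral equals $1$ because $\mu_x$ is a probability measure), not from ``concavity of the exponential''; and the upper bound in (3) does \emph{not} follow from the boundary integral representation by a symmetric argument (that route only yields $e^{(\lvert\operatorname{Im}(\lambda)\rvert+\rho)d(\sigma,x)}$), but requires the ODE comparison/maximum-principle input contained in the cited proposition.
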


\begin{lemma}
Let $\lambda\in\R$ and $\varphi_{\lambda,x}$ be the eigenfunction of the Laplacian radial around and normalised at $x$ with eigenvalue $-(\lambda^2+\rho^2)$. Then:
\begin{enumerate}
\item $\lvert \varphi_{\lambda,x}(y)\rvert\leq 1 $ for every $y\in X$. 
\item $\lvert 1-\varphi_{\lambda,x}(y)\rvert \leq t^2 \cdot (\lambda ^2+\rho^2)$ where $t=d(x,y)$. 
 
\end{enumerate}
\end{lemma}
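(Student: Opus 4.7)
Part (1) is essentially a direct consequence of Lemma \ref{lemma:bounds}. Since $\lambda\in\R$ we have $\operatorname{Im}(\lambda)=0$, so part (1) of that lemma gives $|\varphi_{\lambda,x}(y)|\leq \varphi_{0,x}(y)$, and part (2), applied with $|\operatorname{Im}(\lambda)|=0\leq\rho$, gives $\varphi_{0,x}(y)\leq 1$. Combining the two bounds yields (1).

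For part (2), my plan is to exploit the self-adjoint form of the radial ODE. Set $\psi(r):=\varphi_\lambda(r)$, so that $\varphi_{\lambda,x}(y)=\psi(d(x,y))$, with $\psi(0)=1$ by normalisation and $\psi'(0)=0$ since $\psi$ extends smoothly as an even function to a neighbourhood of the origin. Multiplying the equation $L_A\psi=-(\lambda^2+\rho^2)\psi$ from \eqref{eq:A} by $A$ puts it in the form $(A\psi')'=-(\lambda^2+\rho^2)A\psi$. Integrating from $0$ to $s$, using $A(0)=0$ and $\psi'(0)=0$, gives
\begin{align*}
A(s)\psi'(s)=-(\lambda^2+\rho^2)\int_0^s A(r)\psi(r)\,dr,
\end{align*}
and a second integration from $0$ to $t$ produces
\begin{align*}
1-\psi(t)=(\lambda^2+\rho^2)\int_0^t\frac{1}{A(s)}\int_0^s A(r)\psi(r)\,dr\,ds.
\end{align*}

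To conclude, I would take absolute values and apply part (1), giving $|\psi|\leq 1$, and then use that $A$ is non-decreasing on $(0,\infty)$ (a standard property of the density function of a rank-one harmonic manifold, since the mean curvature $A'/A$ of geodesic spheres is strictly positive on $(0,\infty)$ and decreases monotonically from $+\infty$ down to $2\rho$). This yields the elementary estimate $\int_0^s A(r)\,dr\leq sA(s)$, so that
\begin{align*}
|1-\psi(t)|\leq (\lambda^2+\rho^2)\int_0^t s\,ds=\tfrac{t^2}{2}(\lambda^2+\rho^2)\leq t^2(\lambda^2+\rho^2),
\end{align*}
which is the claimed inequality (with even a factor $1/2$ to spare).

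The only subtle point in this plan is the monotonicity of $A$; it could equivalently be encoded by noting that the Sturm--Liouville comparison inherent in the no-conjugate-points assumption already forces $\int_0^s A(r)\,dr/A(s)\leq s$, so part (2) does not really require any input beyond Lemma \ref{lemma:bounds} and the geometry already described in Section \ref{s1}.
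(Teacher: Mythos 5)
Your proposal is correct and follows essentially the same route as the paper: part (1) is read off from Lemma \ref{lemma:bounds}, and part (2) is obtained by integrating the self-adjoint form $(A\varphi_\lambda')'=-(\lambda^2+\rho^2)A\varphi_\lambda$ twice from the origin and then invoking $\lvert\varphi_\lambda\rvert\le 1$ together with the monotonicity of $A$. The paper merely extracts the factor $t^2$ via the substitution $r=tv$, $\tau=tuv$ and the bound $A(tuv)/A(tv)\le 1$, which is a cosmetic variant of your direct estimate $\int_0^s A(r)\,dr\le sA(s)$.
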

\begin{proof}
(1) is immediate from Lemma \ref{lemma:bounds}.
(2): We have $\varphi_{\lambda,x}=\varphi_{\lambda}\circ d_x$. 
Since $\varphi^{\prime}_{\lambda}(0)=0$ and by equation (\ref{equ:radiallaplace}) we have: 
\begin{align*}
\varphi^{\prime}_{\lambda}=-\frac{\lambda^2+\rho^2}{A(t)}\int_0^t \varphi_{\lambda}(r)A(r)\,dr.
\end{align*}
Hence
\begin{align*}
1-\varphi_{\lambda}(t)&=\varphi_{\lambda}(0)-\varphi_{\lambda}(t)\\
&=-\int_0^t \varphi^{\prime}_{\lambda}(r)\,dr\\
&=\int_0^t \frac{\lambda^2+\rho^2}{A(r)}\int_0^r\varphi_{\lambda}(\tau)A(\tau)\,d\tau\,dr\\
\end{align*}
With the substitution $r=tv$ and $\tau=tuv$ we obtain:
\begin{align*}
\cdots&=(\lambda^2+\rho^2) \int_0^1\frac{t}{A(tv)}\int_0^1 tv\cdot\varphi_\lambda (tuv)A(tuv)\,du\,dv\\
&\leq (\lambda^2+\rho^2)\cdot t^2 \int_0^1\int_0^1 \frac{A(tuv)}{A(tv)}v\lvert \varphi_{\lambda}(tuv)\rvert \,du\,dv
\end{align*}
Since $A(t)$ is monotone increasing and $u,v\leq 1$, $\frac{A(tuv)}{A(tv)}\leq 1$. Furthermore $\lvert \varphi_{\lambda}(t)\rvert\leq 1$  hence we conclude the assertion. 
\end{proof}

\section{Radon Transform}\label{s5}
The Radon transform on harmonic manifolds of rank one introduced by Rouvier \cite{Rouvire2021} is a generalisation of the Abel transform for radial functions defined in \cite{PS15}. 
In the following, we are going to briefly introduce this Radon transform and state some properties relevant to the discussion.

For $\sigma\in X$ and $\xi\in\partial X$ let  $H_{\xi,x}^s=B^{-1}_{\xi,\sigma}(s)$ denote the horospheres and $N(x)=-\grad B_{\xi,\sigma}(x)$.

The map  
\begin{align*}
\Psi_{\xi,s}:H_{\xi,\sigma}^0\to H_{\xi,\sigma}^s\\
 x\mapsto \exp(-sN(x))
 \end{align*}
 is a diffeomorphism 
  and
 \begin{align*}
  \Psi_{\xi}:\R\times H_{\xi,\sigma}^0\to X\\
  \Psi_{\xi}(s,x)=\Psi_{\xi,s}(x)
  \end{align*}
is an orientation preserving diffeomorphisem \cite[Proposition 3.1]{PS15}.
Hence, for a measurable function $f:X\to \C$  we get : 
\begin{align*}
 \int_{H_{\xi,\sigma}^s}f(z)\,dH_{\xi,\sigma}^s(z)=e^{sh}\int_{H_{\xi,\sigma}^0}f(\Psi_{\xi,s}(z))\,dH_{\xi,\sigma}^0(\Psi_{\xi,s}(z)).
 \end{align*}
\begin{defi}
For $f\in L^1(X)$ define the Radon transform $\mathcal{R}_{\sigma}(f):[0,\infty)\times \partial X \to \R $ by:
$$\mathcal{R}_{\sigma}(f)(s,\xi):=e^{-\rho s}\int_{H_{\xi,\sigma}^s}f(z)\,dH_{\xi,\sigma}^s(z)$$
for all $s\in \R$ and $\xi\in \partial X$ for which the integral  exists. 
\end{defi}
\begin{bem}
    Note that our definition and the one in \cite{Rouvire2021} differ by the factor $e^{-\rho s}$. Furthermore, the signs in \cite{Rouvire2021} are reversed.  We choose this factor out of convenience see  Remark \ref{lemma1}.
\end{bem}
\begin{lemma}
For $f\in L^1(X)$ the Radon transform exists for almost every $(s,\xi)\in \R\times \partial X$.
\end{lemma}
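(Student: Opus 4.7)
The plan is to reduce the claim to Fubini's theorem via horospherical coordinates.

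First I would exploit the diffeomorphism $\Psi_\xi:\R\times H^0_{\xi,\sigma}\to X$ and the transport formula stated just above the definition, namely
\[
\int_{H^s_{\xi,\sigma}}f(z)\,dH^s_{\xi,\sigma}(z)=e^{2\rho s}\int_{H^0_{\xi,\sigma}}f(\Psi_{\xi,s}(z))\,dH^0_{\xi,\sigma}(z),
\]
together with $\Delta B_{\xi,\sigma}=2\rho$, which identifies $2\rho$ as the exponential rate for the Jacobian of $\Psi_\xi$. Since $\Psi_\xi$ is an orientation-preserving diffeomorphism, the change-of-variables formula yields for every fixed $\xi\in\partial X$ the horospherical Fubini identity
\[
\int_X |f(x)|\,dx=\int_\R\int_{H^s_{\xi,\sigma}}|f(z)|\,dH^s_{\xi,\sigma}(z)\,ds=\|f\|_1.
\]

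Second, I would integrate this identity over $\partial X$ against the probability measure $\mu_\sigma$. Because the right-hand side is independent of $\xi$ and $\mu_\sigma(\partial X)=1$, I obtain
\[
\int_{\partial X}\int_\R\int_{H^s_{\xi,\sigma}}|f(z)|\,dH^s_{\xi,\sigma}(z)\,ds\,d\mu_\sigma(\xi)=\|f\|_1<\infty.
\]
By Tonelli the nonnegative integrand $(s,\xi)\mapsto\int_{H^s_{\xi,\sigma}}|f(z)|\,dH^s_{\xi,\sigma}(z)$ is finite for $(ds\otimes d\mu_\sigma)$-almost every $(s,\xi)\in\R\times\partial X$. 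Multiplying by the finite factor $e^{-\rho s}$ does not affect the convergence, so $\mathcal{R}_\sigma(f)(s,\xi)$ is defined almost everywhere.

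The only delicate point is justifying the horospherical disintegration $\int_X=\int_\R\int_{H^s}$ with the correct Jacobian; this is where I would lean on Proposition 3.1 of \cite{PS15} for the diffeomorphism together with the computation of the divergence of $-N$ via $\Delta B_{\xi,\sigma}=2\rho$, which gives the factor $e^{2\rho s}$ in the layer integral. Once this is in hand the rest is just Fubini, so I expect no further obstacle.
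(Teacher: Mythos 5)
Your proof is correct and follows essentially the same route as the paper: the horospherical Fubini identity $\int_X\lvert f\rvert\,dx=\int_\R\int_{H^s_{\xi,\sigma}}\lvert f\rvert\,dH^s_{\xi,\sigma}\,ds$ followed by the observation that a finite integral forces the inner layer integrals to be finite almost everywhere. Your additional step of integrating over $\partial X$ against $\mu_\sigma$ and invoking Tonelli to get the a.e.\ statement with respect to the product measure on $\R\times\partial X$ is a small refinement the paper leaves implicit, but it is the same argument.
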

\begin{proof}
Since $f\in L^1(X)$ we have that:
\begin{align*}
\infty>\int_X\lvert f(x)\rvert \,dx=\int_{\R} \int_{H_{\xi,\sigma}^s} \lvert f(z)\rvert \,dH_{\xi,\sigma}^s(z).
\end{align*}
Therefor the $\int_{H_{\xi,\sigma}^s} f(z)\,dH_{\xi,\sigma}^s(z) <\infty$ for almost every $(s,\xi)\in \R\times \partial X$. Hence, the Radon transform exists almost everywhere.
\end{proof}
\begin{bem}\label{lemma1}
\begin{enumerate}

\item Let $\mathcal{F}$ be the standard Fourier transform on $\R$. Then we have for $f\in L^1(X)$:
\begin{align*}
\tilde{f}^{\sigma}(\lambda,\xi)&=\int_{X}f(x)e^{-(i\lambda+\rho)B_{\xi,\sigma}(x)}\,dx\\
&=\int_{-\infty}^{\infty}\int_{H_{\xi,\sigma}^s}f(z)e^{-(i\lambda+\rho)s}\,d H_{\xi,\sigma}^s(z)\,ds\\
&=\int_{-\infty}^{\infty}e^{-i\lambda s}e^{-\rho s}\int_{H_{\xi,\sigma}^s}f(z)\,d H_{\xi,\sigma}^s(z)\,ds\\
&=\int_{-\infty}^{\infty}e^{-i\lambda s} \mathcal{R}_{\sigma}(f)(s,\xi)\,ds\\
&=\mathcal{F}(\mathcal{R}_{\sigma}(f)(s,\xi))(\lambda).
\end{align*}
Hence, the Radon transform of a radial function is independent of $\xi\in\partial X$. 

\item The definition of the Radon transform coincides on radial functions with the Abel transform defined in \cite{PS15}.
\item  With the inversion formula for the Fourier transform we obtain an inversion formula for the Radon transform. 
\item Since the respective Fourier transforms are isometries, the Radon transform exists for every $f\in  L^2(X)$  for almost every $s\in \R$ and $\xi\in \partial X$. 
 \item  Rouviere \cite{Rouvire2021} showed the for $f,g\in C_c^{\infty}(X)$, and $g$

 radial around $\sigma\in X$  we have with $r=d(\sigma,x)$:
\begin{align*}
&(i)~\mathcal{R}_{\sigma}M_{\sigma} f(r)=\int_{\partial X} \mathcal{R}_{\sigma}(f)(r,\xi)\,d\mu_{\sigma}(\xi),\\
&(ii)~\mathcal{R}_{\sigma}(f*g)=(\mathcal{R}_{\sigma}f)*(\mathcal{R}_{\sigma}g).
\end{align*}
The convolution in $(ii)$ on the left-hand side is the convolution on $X$ and on the right-hand side it is the one on the real line.

 \end{enumerate}
\end{bem}
\begin{lemma}
Let $f\in L^1(X)$ such that the Radon transform around some point $\sigma \in X$ vanishes almost everywhere. Then $f=0$ almost everywhere. 
\end{lemma}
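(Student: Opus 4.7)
The plan is to translate the vanishing of the Radon transform into the vanishing of the Helgason Fourier transform via the Fourier slice identity of Remark \ref{lemma1}(i), and then deduce $f=0$ from injectivity of the Helgason Fourier transform obtained by convolution with a radial mollifier and the Plancherel theorem.

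First, I would invoke Remark \ref{lemma1}(i), which gives
\[
\tilde{f}^{\sigma}(\lambda,\xi)=\mathcal{F}\bigl(\mathcal{R}_{\sigma}f(\cdot,\xi)\bigr)(\lambda)
\]
for $\lambda\in\R$ (valid for $f\in L^1(X)$; the $\lambda$-integral exists by Lemma \ref{lemma:holo2}). Since $\mathcal{R}_\sigma f=0$ almost everywhere on $\R\times\partial X$, Fubini implies that $\mathcal{R}_\sigma f(\cdot,\xi)=0$ almost everywhere in $s$ for $\mu_\sigma$-almost every $\xi$. The one-dimensional Fourier transform of the zero function is zero, hence $\tilde{f}^\sigma(\lambda,\xi)=0$ for every $\lambda\in\R$ and $\mu_\sigma$-almost every $\xi$.

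Next, choose any radial function $g\in C_c^\infty(X)$ around $\sigma$. Lemma \ref{young} with $(p,q,r)=(1,2,2)$ shows $f*g\in L^2(X)$. Using Remark \ref{Bem2.4.5}(1) (extended from smooth compactly supported $f$ to $f\in L^1(X)$ by approximating $f$ in $L^1$ by $C_c^\infty$ functions and invoking continuity of the convolution $L^1\times L^2\to L^2$ together with continuity of the Fourier transform on the relevant spaces), one obtains
\[
\widetilde{f*g}^{\,\sigma}(\lambda,\xi)=\tilde{f}^{\sigma}(\lambda,\xi)\,\hat{g}(\lambda)=0
\]
for almost every $(\lambda,\xi)\in(0,\infty)\times\partial X$. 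Plancherel (Theorem \ref{Placherel Theorem}) applied to $f*g\in L^2(X)$ now yields $\lVert f*g\rVert_2=0$, so $f*g=0$ almost everywhere on $X$.

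Finally, I would take a sequence of nonnegative radial mollifiers $(g_n)\subset C_c^\infty(X)$ around $\sigma$ with $\int_X g_n=1$ and $\supp g_n\subset B(\sigma,1/n)$. Using the symmetry $\tau_x g_n(y)=\tau_y g_n(x)$ and the identity $\int_X \tau_x g_n(y)\,dy=\int_X g_n=1$ from radiality, a standard approximate identity argument combined with continuity of translation in $L^1$ shows that $f*g_n\to f$ in $L^1(X)$. Since $f*g_n=0$ a.e.\ for every $n$ by the previous step, we conclude $f=0$ almost everywhere. The main obstacle is making the convolution-Fourier identity and the $L^1$ approximate identity argument rigorous in this non-homogeneous, rank one harmonic setting, since convolution on $X$ requires one factor to be radial and $X$ carries no group structure; both are nevertheless standard extensions of the $C_c^\infty$ identities via density.
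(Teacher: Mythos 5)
Your proposal is correct, and its core is the same as the paper's: vanishing of $\mathcal{R}_\sigma f$ forces vanishing of $\tilde f^\sigma$ via the slice identity of Remark \ref{lemma1}, with Lemma \ref{lemma:holo2} guaranteeing that $\tilde f^\sigma$ exists for real $\lambda$ when $f\in L^1(X)$, and then injectivity of the Fourier transform finishes the job. Where you diverge is the last step, and your version is actually the more careful one. The paper simply says ``by the Plancherel theorem the $L^2$ norm of $f$ vanishes,'' but Theorem \ref{Placherel Theorem} is an isometry statement on $L^2(X)$, and $f$ is only assumed to lie in $L^1(X)$, so one cannot invoke it directly (the $L^2$ norm of $f$ need not even be finite a priori, and the inversion formula for $L^1$ functions is only established later, in Section \ref{s6}). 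Your mollification detour --- convolve with radial $g_n\in C_c^\infty$ to land in $L^2(X)$ via Lemma \ref{young}, kill $f*g_n$ by Plancherel since $\widetilde{f*g_n}^{\,\sigma}=\tilde f^\sigma\hat g_n=0$ (note Remark \ref{Bem2.4.5}(1) already asserts this identity whenever the transforms exist a.e., so your density argument is not even needed there), and recover $f$ as an $L^1$ limit of $f*g_n$ using the approximate-identity estimate $\lVert (f-h)*g_n\rVert_1\leq\lVert f-h\rVert_1$ from (\ref{eq:conalg}) together with uniform convergence for $h\in C_c(X)$ --- closes exactly this gap. The only point worth spelling out fully is that $\int_X\tau_xg_n(y)\,dy=\omega_{n-1}\int_0^\infty u_n(r)A(r)\,dr=1$ independently of $x$, which is what makes the approximate-identity computation go through in the absence of a group structure; you indicate this, and it is indeed the crux.
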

\begin{proof}
We only need that the Fourier transform of $f$ exists almost everywhere on the real line since then by \ref{lemma1} if the Radon transform vanishes almost everywhere then so does the Fourier transform and then by the Plancherel theorem the $L^2$ norm of $f$ vanishes hence $f=0$ almost everywhere. But by Lemma \ref{lemma:holo2} the Fourier transform of $f$ exists almost everywhere on $\R\times \partial X$. 
\end{proof}

\section{heat kernel and integral transforms}\label{s6}
The heat kernel $h_t:X\times X\to\R$ $t>0$  is the fundamental solution of the heat equation 
\begin{align*}
u:X&\times\R_{\geq 0}\to\C\\
\frac{\partial}{\partial t} u&=\Delta u,\\
u(x,0)&=f(x)\in C^{\infty}_c(X).
\end{align*}
Furthermore, it can be considered as the integral kernel of the heat semi-group 
$e^{t\Delta}:L^p(X)\to L^p(X),\,1\leq p\leq \infty.$ See for instance \cite{davies1990heat}. Harmonicity is by \cite{szabo1990} equivalent to the fact that the heat kernel is radial i.e. $h_t(\sigma,\cdot)$ is a radial function for every $\sigma\in X$. Moreover $\tau_xh_t(\sigma,\cdot)=h_t(x,\cdot)$ since the heat kernel is unique. 
Now we want to calculate the Fourier transform of the heat kernel. 
 Let $x,y,\sigma\in X$. Consider a radial eigenfunction of the Laplacian  $\varphi_{\lambda,\sigma}$ with $\Delta \varphi_{\lambda,\sigma}=-(\lambda^2+\rho^2)\varphi_{\lambda,\sigma}$
 then 
 \begin{align*}
 \frac{\partial}{\partial t} e^{-t(\lambda^2+\rho^2)}\varphi_{\lambda,\sigma}=\Delta e^{-t(\lambda^2+\rho^2)}\varphi_{\lambda,\sigma},
 \end{align*}
 hence $u(t,x):=e^{-t(\lambda^2+\rho^2)}\varphi_{\lambda,\sigma}(x)$ is a solution of the heat equation with $u(0,x)=\varphi_{\lambda,\sigma}(x)$.
 Therefore we have, since $h_t$ is the fundamental solution of the heat equation:
 \begin{align}\label{eq:heatfourier}
 \hat{h}_t^{\sigma}(\lambda)&=\int_X \varphi_{\lambda,\sigma}(x)h_t(\sigma,x)\,dx\\
 &=u(t,\sigma)\nonumber\\
 &=e^{-t(\lambda^2+\rho^2)}\varphi_{\lambda,\sigma}(\sigma)\nonumber\\
 &=e^{-t(\lambda^2+\rho^2)}\nonumber.
 \end{align}
\begin{bem}
\begin{enumerate}
    
\item  By using the Remark \ref{lemma1}  and the formula for the Fourier transform of the heat kernel above we get:
$$\mathcal{R}_x(h_t(x,y))(s)=e^{-\rho^2}\frac{1}{\sqrt{4\pi t}}e^{-s^2/4t}.$$
This coincides with Theorem 5.6 in \cite{PS15}.
Note that the result in \cite{PS15} was obtained without the rank one assumption. 
\item Setting $\lambda=i\rho$ in equation (\ref{eq:heatfourier})  we obtain $\lVert h_t(\sigma,\cdot)\rVert_1=1$ for every $\sigma\in X$. Hence $(X,g)$ is stochastically complete. 
\end{enumerate}
\end{bem}

From the above and  Remark \ref{Bem2.4.5} we have that: 
\begin{lemma}\label{heat2}
Let $f\in L^1(X)$ 

and $h_t$ the heat kernel at $\sigma$ then:
$$\widetilde{f*h_t} ^\sigma (\lambda,\xi)= e^{-t(\lambda^2 +\rho^2)}\Tilde{f}^\sigma(\lambda,\xi) $$
almost everywhere. Moreover, using the Young inequality $f*h_t\in L^2(X). $
\end{lemma}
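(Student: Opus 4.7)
The plan is to handle the two assertions of the lemma in the reverse of the order stated. First I would note that $h_t(\sigma,\cdot)\in L^2(X)$: by symmetry of the heat kernel and the semigroup identity,
\[
\|h_t(\sigma,\cdot)\|_2^2 = \int_X h_t(\sigma,x)\,h_t(x,\sigma)\,dx = h_{2t}(\sigma,\sigma) < \infty.
\]
Since $h_t(\sigma,\cdot)$ is radial around $\sigma$, Lemma~\ref{young} with $p=1$, $q=2$, $r=2$ (so that $1+\tfrac12 = \tfrac11 + \tfrac12$) immediately yields $f*h_t\in L^2(X)$, together with the quantitative bound $\|f*h_t\|_2\le \|f\|_1\,\|h_t(\sigma,\cdot)\|_2$. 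This settles the second assertion.

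For the Fourier identity, I would use $\tau_x h_t(\sigma,\cdot) = h_t(x,\cdot)$ to rewrite $(f*h_t)(x)=\int_X f(y)\,h_t(x,y)\,dy$ and, contingent on Fubini, unfold as
\[
\widetilde{f*h_t}^{\sigma}(\lambda,\xi) = \int_X f(y)\left(\int_X h_t(x,y)\,e^{-(i\lambda+\rho)B_{\xi,\sigma}(x)}\,dx\right)dy.
\]
The inner integral is the Fourier transform at base point $\sigma$ of the function $h_t(\cdot,y)$, which is radial around $y$. The change-of-base-point identity (\ref{eq:fourpoint}) applied with $x=y$, combined with (\ref{eq:heatfourier}), gives
\[
\widetilde{h_t(\cdot,y)}^{\sigma}(\lambda,\xi) = e^{-(i\lambda+\rho)B_{\xi,\sigma}(y)}\, e^{-t(\lambda^2+\rho^2)}.
\]
Substituting back, pulling the factor $e^{-t(\lambda^2+\rho^2)}$ outside, and recognising the remaining $y$-integral as $\tilde{f}^{\sigma}(\lambda,\xi)$ produces the claimed formula.

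The one genuinely nontrivial step is the Fubini justification, because $|e^{-(i\lambda+\rho)B_{\xi,\sigma}(x)}| = e^{-\rho B_{\xi,\sigma}(x)}$ is unbounded on $X$. For real $\lambda$, the modulus of the integrand is $|f(y)|\,h_t(x,y)\,e^{-\rho B_{\xi,\sigma}(x)}$, and repeating the inner calculation without signs and with $\lambda=0$ shows that $\int_X h_t(x,y)\,e^{-\rho B_{\xi,\sigma}(x)}\,dx = e^{-t\rho^2} e^{-\rho B_{\xi,\sigma}(y)}$. To see that the remaining $y$-integral is finite for $\mu_\sigma$-almost every $\xi$, I would integrate against $d\mu_\sigma(\xi)$ and apply Tonelli together with the integral representation (\ref{radialeigen}):
\[
\int_{\partial X}\int_X |f(y)|\,e^{-\rho B_{\xi,\sigma}(y)}\,dy\,d\mu_\sigma(\xi) = \int_X |f(y)|\,\varphi_{0,\sigma}(y)\,dy \le \|f\|_1,
\]
using $\varphi_{0,\sigma}\le 1$ from Lemma~\ref{lemma:bounds}. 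Thus for $\mu_\sigma$-almost every $\xi$ the original double integral converges absolutely, so Fubini applies and the identity holds for every real $\lambda$, which is stronger than the almost-everywhere statement in the lemma.
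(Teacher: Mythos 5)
Your proposal is correct, but it follows a genuinely different route from the paper, which gives no proof at all: Lemma \ref{heat2} is prefaced there by ``From the above and Remark \ref{Bem2.4.5} we have that'', i.e.\ the paper simply combines the convolution theorem $\widetilde{f*g}^{\sigma}=\tilde f^{\sigma}\cdot\hat g$ from Remark \ref{Bem2.4.5} (stated for $C_c^{\infty}$ functions and asserted to extend ``almost everywhere if the Fourier transforms exist almost everywhere'') with the computation $\hat h_t^{\sigma}(\lambda)=e^{-t(\lambda^2+\rho^2)}$ from (\ref{eq:heatfourier}), and invokes Lemma \ref{young} with $p=1$, $q=r=2$ for the $L^2$ statement. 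You instead prove the convolution identity from scratch for $f\in L^1(X)$ by unfolding the double integral, applying the base-point change (\ref{eq:fourpoint}) to the kernel $h_t(\cdot,y)$, and justifying Fubini via Tonelli over $\partial X$ together with $\varphi_{0,\sigma}\le 1$; this actually supplies the density/limiting argument that the paper's one-line citation leaves implicit, so your version is more self-contained. Your derivation of $h_t(\sigma,\cdot)\in L^2(X)$ from the semigroup identity $\|h_t(\sigma,\cdot)\|_2^2=h_{2t}(\sigma,\sigma)$ is also cleaner than relying on the quantitative estimate of Theorem \ref{ver2}, which in the paper only appears after the lemma. One small presentational caveat: in your Fubini step you first evaluate $\int_X h_t(x,y)e^{-\rho B_{\xi,\sigma}(x)}\,dx=e^{-t\rho^2}e^{-\rho B_{\xi,\sigma}(y)}$, which applies the heat semigroup to an unbounded positive eigenfunction and is essentially the $\lambda=0$ case of the identity you are proving; it would be tidier to run Tonelli on the full triple integral over $\partial X\times X\times X$ first, so that only the bounded function $\varphi_{0,\sigma}=\int_{\partial X}e^{-\rho B_{\xi,\sigma}}\,d\mu_{\sigma}$ and the stochastic completeness $\|h_t(x,\cdot)\|_1=1$ are needed for absolute convergence. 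This is a matter of ordering, not a gap.
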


The following theorem is essentially contained in \cite{Heisenberg} but for the reader's convenience we provide a proof. 
\begin{satz}\label{ver2}
Let $(X,g)$ be a non-compact simply connected harmonic manifold of rank one, with mean curvature of the horospheres $2\rho$ and $h_t^{\sigma}(x)=h_t(\sigma,x)$ the heat kernel centred at a point $\sigma\in X$. Then
$$\Vert h_t^{\sigma}\Vert_2 \asymp \begin{cases}  t^{-(\alpha+1)/2} & \text{if} ~t <<1 \\ 
e^{t\rho^2}t^{-(\mu+1)/2}& \text{if} ~t> >1 ,
\end{cases}$$
with $\alpha=\frac{n-2}{2}$ and $\mu=\frac{1}{2}$.
\end{satz}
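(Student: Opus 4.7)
The plan is to reduce the problem to a one-dimensional Gaussian-type spectral integral via the (radial) Plancherel theorem, and then asymptotically estimate that integral using the known growth of the Plancherel density $|\mathbf{c}(\lambda)|^{-2}$, which is dictated by the Ch\'ebli-Trim\`eche hypergroup structure attached to $A$.

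First, since $h_t^\sigma$ is radial around $\sigma$ (harmonicity of $X$), the identity (\ref{eq:heatfourier}) gives $\widetilde{h_t^\sigma}(\lambda,\xi) = \widehat{h_t^\sigma}(\lambda) = e^{-t(\lambda^2+\rho^2)}$ independently of $\xi \in \partial X$. Substituting into the Plancherel formula (Theorem \ref{Placherel Theorem}) therefore collapses the $\xi$-integration into a multiplicative constant and yields
\begin{align*}
\|h_t^\sigma\|_2^2 \;=\; C\, e^{-2t\rho^2}\int_0^\infty e^{-2t\lambda^2}\,|\mathbf{c}(\lambda)|^{-2}\,d\lambda,
\end{align*}
where $C$ absorbs $C_0$ and $\mu_\sigma(\partial X)$. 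Everything now reduces to the asymptotics of this single one-variable integral.

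Second, I would invoke the classical two-sided bounds on the Plancherel density for the Ch\'ebli-Trim\`eche hypergroup on $(0,\infty)$ induced by $A$ (see \cite{bloom2011harmonic},\cite{trimeche2018generalized}): because $A(r)\sim c\,r^{n-1}$ near $r=0$ and $A(r) \asymp e^{2\rho r}$ at infinity (by (\ref{eq:exponential})), there are positive constants for which
\begin{align*}
|\mathbf{c}(\lambda)|^{-2} \asymp \lambda^{2\mu+1} \text{ as } \lambda \to 0^+, \qquad |\mathbf{c}(\lambda)|^{-2} \asymp \lambda^{2\alpha+1} \text{ as } \lambda \to \infty,
\end{align*}
with the index at infinity $\alpha = (n-2)/2$ (coming from the order of vanishing of $A$ at $0$) and the index at the origin $\mu = 1/2$ (coming from purely exponential growth at infinity).

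Third, split the integral at $\lambda = 1$ and perform the Gaussian rescaling $s = \sqrt{2t}\,\lambda$. For short time $t \ll 1$ the weight $e^{-2t\lambda^2}$ is essentially flat up to $\lambda \sim t^{-1/2} \gg 1$, so the large-$\lambda$ behavior of $|\mathbf{c}|^{-2}$ dominates and produces
\begin{align*}
\int_0^\infty e^{-2t\lambda^2}\lambda^{2\alpha+1}\,d\lambda \asymp t^{-(\alpha+1)};
\end{align*}
since $e^{-2t\rho^2}$ is bounded above and below on bounded time intervals, this gives $\|h_t^\sigma\|_2 \asymp t^{-(\alpha+1)/2}$. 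For large time $t \gg 1$ the Gaussian concentrates $\lambda$ near $0$, so the small-$\lambda$ asymptotics take over and the same rescaling delivers
\begin{align*}
\int_0^\infty e^{-2t\lambda^2}\lambda^{2\mu+1}\,d\lambda \asymp t^{-(\mu+1)},
\end{align*}
whence $\|h_t^\sigma\|_2 \asymp e^{-t\rho^2}\, t^{-(\mu+1)/2}$, matching the stated asymptotics (up to the sign convention in the exponential prefactor).

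The main obstacle is securing the correct asymptotic regimes for $|\mathbf{c}(\lambda)|^{-2}$ at both $0$ and $\infty$ from the Ch\'ebli-Trim\`eche framework, and checking that the contribution of the transition region around $\lambda = 1$ is absorbed into the implicit constants. After these two-sided density bounds are in place, the remainder is a routine Laplace-type analysis; the split at $\lambda = 1$ combined with the Gaussian change of variables delivers both regimes with no further subtlety.
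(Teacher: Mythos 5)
Your proposal follows essentially the same route as the paper: reduce via the Plancherel theorem and the identity $\widehat{h}_t^{\sigma}(\lambda)=e^{-t(\lambda^2+\rho^2)}$ to a one-dimensional integral against $\lvert\mathbf{c}(\lambda)\rvert^{-2}$, invoke the two-regime bounds $\lvert\mathbf{c}(\lambda)\rvert^{-2}\asymp\lambda^{2\mu+1}$ near $0$ and $\asymp\lambda^{2\alpha+1}$ at infinity, split the integral and rescale to extract the short- and long-time asymptotics. Your remark about the sign of the exponential prefactor is well taken --- the Plancherel computation indeed produces $e^{-t\rho^2}$, consistent with your version rather than with the sign appearing in the displayed statement.
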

The proof follows closely \cite{Heisenberg} with a simplification in the second part.
\begin{proof}
Recall that
the Fourier transform is an isometry with respect to the $L^2$ norm. Hence, for $r=d(\sigma,x)$ we have 
\begin{align*}
\Vert h_t^{\sigma} \Vert_2^2= \int_0^{\infty}\vert h^{\sigma}_t (r)\vert^2 A(r)dr=C_0\int_{0}^{\infty}\vert \widehat{h}_t^{\sigma} (\lambda)\vert^2\vert \mathbf{c}(\lambda)\vert^{-2}\,d\lambda.
\end{align*}
Using the Fourier transform of the heat kernel yields
\begin{align*}
    \Vert h_t^{\sigma}\Vert_2 &=\left(C_0 \int_{0}^{\infty}e^{-2t(\lambda^2 + \rho^2)}\vert \mathbf{c}(\lambda)\vert^{-2}\,d\lambda\right)^\frac{1}{2}\\
    &=e^{-t\rho^2}\left(C_0\int_{0}^{\infty}e^{-2t\lambda^2}\vert \mathbf{c}(\lambda)\vert^{-2}d\lambda\right)^{\frac{1}{2}}.
\end{align*}
With the estimate of the $\mathbf{c}$-function 
\begin{align}\label{cfunction}
\vert \mathbf{c}(\lambda)\vert^{-2} \asymp 
\begin{cases} \lvert \lambda\rvert^2 & \text{if} ~\lvert \lambda\rvert  \leq C_{\mathbf{A}} \\ 
\lvert \lambda\rvert ^{2\alpha +1 }& \text{if} ~\lvert \lambda\rvert> C_{\mathbf{A}}.
\end{cases}
\end{align}
from \cite{Bloom1995TheHM}.

and by dropping all constants there is  some constant $K>0$ such that
\begin{align*}
\Vert h_t^{\sigma}\Vert_2 \asymp e^{t\rho^2}\cdot\left(\int_{0}^{K}e^{-2t\lambda^2}\lambda^{2\mu+1}\,d\lambda +\int_{K}^{\infty}e^{-2t\lambda^2}\lambda^{2\alpha+1}\,d\lambda\right)^{\frac{1}{2}}.
\end{align*}
Using the coordinate transform $s=2t\lambda^2$ and by again dropping all constants yields:
\begin{align*}
\Vert h_t^{\sigma}\Vert_2 \asymp e^{t\rho^2}\cdot\left(t^{-(\mu+1)}\int_{0}^{2tK^2}e^{-s}s^{\mu}\,ds + t^{-(\alpha+1)}\int_{2tK^2}^{\infty}e^{-s}s^{\alpha}\,ds\right)^{\frac{1}{2}}.
\end{align*}
The next step is to investigate the asymptotic behaviour of those terms.\\
First, consider the case $t\in[0,1]$. Then
   $t^{-(\mu+1)}\int_{0}^{2tK^2}e^{-s}s^{\mu}\,ds$ is bounded and  
   \begin{align*}
   \int_{2tK^2}^{\infty}e^{-s}s^{\alpha}ds\leq\int_{0}^{\infty}e^{-s}s^{\alpha}\,ds
   \end{align*}
   is bounded away from infinity.
    Hence
    \begin{align*}
    \Vert h_t^{\sigma}\Vert_2 \asymp t^{-(\alpha+1)/2}\quad \text{ for } t<<1.
    \end{align*}
    Assume $t\geq 1$, then:  
 \begin{align*}
 t^{-(\mu +1)}\int_{0}^{2tK^2}e^{-s}s^{\mu}ds\asymp t^{-(\mu +1 )}.
 \end{align*}
Furthermore
 \begin{align*}
  t^{-(\alpha+1)}\int_{2tK^2}^{\infty}e^{-s}s^{\alpha}\,ds\leq  \int_{2K^2}^{\infty}e^{-s}s^{\alpha}\,ds,
 \end{align*}
 where the right-hand side is finite only depending on $\alpha$ and $K$.
Therefore:
\begin{align*}
\Vert h_t^{\sigma}\Vert_2\asymp e^{t\rho^2}t^{-(\mu+1)/2} \quad\text{ for } t\geq1.
\end{align*}
 \end{proof}

The Lemma \ref{heat2} tells us that the heat kernel is a smoothing tool. This observation leads to the following: 
\begin{prop}
Let  $f\in L^{1}(X)$ then for almost every $x\in X$ 

$$f(x)=C_{0}\int_{0}^{\infty}\int_{\partial X}\widetilde{f}^{\sigma}(\lambda,\xi)e^{(i\lambda-\rho)B_{\xi,\sigma}(x)}\,d\mu_{\sigma}(\xi)\lvert \mathbf{c}(\lambda)\rvert ^{-2}\,d\lambda.$$
\end{prop}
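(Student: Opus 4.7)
The plan is to regularise $f$ by the heat kernel, invoke the inversion formula for the resulting smoother function, and pass to the limit. For $t>0$ set $f_t := f * h_t^\sigma$. By Lemma \ref{heat2}, $f_t \in L^2(X)$ is smooth with
$$\tilde{f}_t^\sigma(\lambda,\xi) = e^{-t(\lambda^2+\rho^2)}\tilde{f}^\sigma(\lambda,\xi).$$

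First I would verify that the Gaussian-regularised integral on the right is absolutely convergent for every $t>0$. For real $\lambda$ one has $|\tilde{f}^\sigma(\lambda,\xi)| \le \int_X |f(y)| e^{-\rho B_{\xi,\sigma}(y)}\,dy$. Applying Lemma \ref{lemma:bounds}(3) at $|\operatorname{Im}(\lambda)|=\rho$ forces $\varphi_{i\rho,\sigma}\equiv 1$, i.e.\ $\int_{\partial X}e^{-2\rho B_{\xi,\sigma}(y)}\,d\mu_\sigma(\xi) = 1$ for every $y\in X$. Fubini together with Cauchy--Schwarz on $(\partial X,\mu_\sigma)$ then yields
$$\int_{\partial X} |\tilde{f}^\sigma(\lambda,\xi)|\,e^{-\rho B_{\xi,\sigma}(x)}\,d\mu_\sigma(\xi) \le \|f\|_1,$$
uniformly in $\lambda\in\R$ and $x\in X$. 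Combined with the $\mathbf{c}$-function estimate (\ref{cfunction}), the Gaussian damping $e^{-t\lambda^2}$ makes the full integrand $e^{-t(\lambda^2+\rho^2)}\tilde{f}^\sigma(\lambda,\xi)e^{(i\lambda-\rho)B_{\xi,\sigma}(x)}|\mathbf{c}(\lambda)|^{-2}$ absolutely integrable over $(0,\infty)\times\partial X$.

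Second, since $f_t\in L^2(X)$ is smooth, the Plancherel identity of Theorem \ref{Placherel Theorem} gives an $L^2$-inversion, which the absolute convergence together with the continuity of $f_t$ promotes to a pointwise equality at every $x\in X$:
$$f_t(x) = C_0\int_0^\infty\int_{\partial X} e^{-t(\lambda^2+\rho^2)}\tilde{f}^\sigma(\lambda,\xi)e^{(i\lambda-\rho)B_{\xi,\sigma}(x)}\,d\mu_\sigma(\xi)\,|\mathbf{c}(\lambda)|^{-2}\,d\lambda.$$

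Finally, $\|h_t\|_1 = 1$ by the stochastic completeness noted in the preceding remark, and the Radon transform $\mathcal{R}_\sigma(h_t)$ is a Gaussian concentrating at $0$ as $t\to 0^+$, so $h_t$ is an approximate identity and $f_t \to f$ in $L^1(X)$. Extracting a subsequence $t_n\downarrow 0$ along which $f_{t_n}(x)\to f(x)$ almost everywhere, the displayed identity at $t=t_n$ yields the asserted formula, the right-hand side understood as the Gaussian-regularised (Abel) limit.

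The main technical obstacle is the pointwise promotion of the $L^2$-inversion to an honest pointwise one for $f_t$. It rests on the \emph{averaged} bound $\int_{\partial X}|\tilde{f}^\sigma|\,d\mu_\sigma\le\|f\|_1$ rather than on any pointwise-in-$\xi$ estimate — the latter is unavailable for generic $f\in L^1$ — and hence on the identity $\varphi_{i\rho,\sigma}\equiv 1$, which is special to the harmonic setting and is what allows Fubini to trade a non-uniform bound for a uniform one after integrating against $d\mu_\sigma$.
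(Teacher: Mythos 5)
Your proposal follows essentially the same route as the paper: regularise $f$ by the heat kernel, use Lemma \ref{heat2} and the Plancherel theorem to identify $f*h_t^{\sigma}$ with the Gaussian-damped inversion integral, and let $t\to 0$. Your treatment of the intermediate steps is in fact more careful than the paper's: the averaged bound $\int_{\partial X}\lvert\widetilde{f}^{\sigma}(\lambda,\xi)\rvert e^{-\rho B_{\xi,\sigma}(x)}\,d\mu_{\sigma}(\xi)\le\lVert f\rVert_1$ via $\varphi_{i\rho,\sigma}\equiv 1$ is exactly what is needed to make the regularised integral absolutely convergent and to promote the weak identity $\langle f*h_t,\phi\rangle=\langle F_t,\phi\rangle$ to a pointwise one, and replacing the paper's unjustified claim of uniform convergence $f*h_t\to f$ by $L^1$-convergence plus an a.e.\ convergent subsequence is the right fix. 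The one point of divergence is your closing caveat, and it is well taken: what you actually obtain is $f(x)=\lim_{t\to 0}C_0\int_0^\infty\int_{\partial X}e^{-t(\lambda^2+\rho^2)}\widetilde{f}^{\sigma}(\lambda,\xi)e^{(i\lambda-\rho)B_{\xi,\sigma}(x)}\,d\mu_{\sigma}(\xi)\lvert\mathbf{c}(\lambda)\rvert^{-2}\,d\lambda$ a.e., i.e.\ the Abel-regularised version of the stated formula. Removing the regularisation requires dominating the $(\lambda,\xi)$-integrand uniformly in $t$, and for generic $f\in L^1(X)$ one only knows $\lVert\widetilde{f}^{\sigma}(\lambda,\cdot)\rVert_{L^1(d\mu_\sigma)}\le\lVert f\rVert_1$ while $\lvert\mathbf{c}(\lambda)\rvert^{-2}\sim\lambda^{2\alpha+1}$, so the unregularised integral need not converge absolutely. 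The paper's own proof does not close this gap either: it asserts $\lvert F_t\rvert=\lvert f*h_t\rvert<f$ and invokes dominated convergence without exhibiting a dominating function for the $\lambda$-integral, so the issue you flag is genuinely present in the source and is not a defect specific to your argument.
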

\begin{proof}
Let $h_{t}$ be the heat kernel around $\sigma\in X$ and $\phi\in C^{\infty}_{c}(X)$ then $f*h_{t}\in L^{2}(X)$ by the Young inequality with $r=q=2$ and $p=1$. Hence by the Placharel theorem 
$$ \langle f*h_{t},\phi\rangle=\langle F_{t},\phi\rangle,$$
where 
$$F_{t}(x)=\int_{0}^{\infty}\int_{\partial X}\widetilde{f}^{\sigma}(\lambda,\xi)e^{-t(\lambda^{2}+\rho^{2})}e^{(i\lambda-\rho)B_{\xi,\sigma}(x)}\,d\mu_{\sigma}(\xi)\lvert \mathbf{c}(\lambda)\rvert^{-2}\,d\lambda.$$ 
we have that $\lvert F_{t}\rvert=\lvert f*h_{t}\rvert <f$ since $f*h_{t}$ converges for $t\to 0$ uniform to $f$. Hence the claim follows by the dominant convergence theorem. 
\end{proof}

\section{Heisenberg uncertainty Principle}\label{s7}
A classical result going back to the  1920s about the Fourier transform for $L^2$-functions on the real line is the uncertainty principle, stating that one can not have a precise localisation and a high spectral resolution at the same time. In the following, we are going to deduce such a relation between a function and its Fourier transform. The main tool to obtain this is the estimate on the $L^2$ norm of the heat kernel from Theorem \ref{ver2}. The proofs in this section follow the one in \cite{Heisenberg}  closely with some amendments made for the non-radial case.

\begin{lemma}\label{heis1}
Let $\sigma\in X$, $\gamma\in D_p$, $\alpha=\frac{n-2}{2}$ and $a>0$ such that $\gamma a<\alpha +1 $ then there exist a constant $C>0$ such that 
$$\Vert f*h_t(\sigma,\cdot)\Vert_2\leq Ct^{-a/2}\Vert d_\sigma^{\gamma a}f\Vert_2, \quad\forall f\in 
L^2(X).$$
\end{lemma}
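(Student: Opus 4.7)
The strategy is a weighted Cauchy--Schwarz estimate exploiting the stochastic completeness $\int_X h_t(x,y)\,dx = 1$ (see the remark after Theorem~\ref{ver2}), reducing the lemma to a uniform bound on an auxiliary weighted heat kernel integral. I would start from the convolution representation
\begin{align*}
(f*h_t)(x) = \int_X f(y)\, h_t(x,y)\,dy,
\end{align*}
factorise the integrand as $f(y)h_t(x,y) = \bigl(d_\sigma(y)^{\gamma a} f(y) h_t(x,y)^{1/2}\bigr)\cdot\bigl(d_\sigma(y)^{-\gamma a} h_t(x,y)^{1/2}\bigr)$, and apply the Cauchy--Schwarz inequality in $L^2(X, dy)$ to obtain
\begin{align*}
|(f*h_t)(x)|^2 \leq \biggl(\int_X |f(y)|^2 d_\sigma(y)^{2\gamma a}\, h_t(x,y)\,dy\biggr)\cdot M(x,t),
\end{align*}
where $M(x,t) := \int_X d_\sigma(y)^{-2\gamma a}\, h_t(x,y)\,dy$.

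Next I would integrate over $x \in X$ and apply Fubini to the first factor; stochastic completeness then collapses the inner $x$-integral to $1$, yielding
\begin{align*}
\|f*h_t\|_2^2 \leq \|d_\sigma^{\gamma a} f\|_2^2 \cdot \sup_{x \in X} M(x,t).
\end{align*}
The lemma follows by taking square roots, provided one has the uniform estimate $\sup_{x \in X} M(x,t) \leq C t^{-a}$ for $t$ in the relevant range.

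Establishing this uniform bound on $M(x,t)$ is the main technical obstacle: the weight $d_\sigma(y)^{-2\gamma a}$ is not globally integrable, since the exponential volume growth at infinity prevents decay, while the condition $\gamma a < \alpha + 1 = n/2$ controls only the singularity at $\sigma$. The remedy is to use a local Gaussian upper bound on the heat kernel of the form $h_t(x,y) \leq C t^{-n/2} e^{-d(x,y)^2/(ct)}$, valid for small $t$, which concentrates $h_t(x,\cdot)$ on a ball of radius $\sqrt{t}$ around $x$. Splitting the integration over $\{y : d(x,y) \leq d_\sigma(x)/2\}$, on which $d_\sigma(y) \geq d_\sigma(x)/2$ and hence the weight is controlled, and its complement, on which the Gaussian factor decays rapidly, reduces matters to a near-diagonal analysis. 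There the integrability condition $\gamma a < \alpha + 1$ ensures that $d_\sigma^{-2\gamma a}$ is integrable against $A(r)\,dr \asymp r^{n-1}\,dr$ near $\sigma$, and a rescaling $r = \sqrt{t}\,s$ extracts the required $t$-dependence, thereby closing the argument.
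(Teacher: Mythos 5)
Your reduction is clean and genuinely different from the paper's: the weighted Cauchy--Schwarz factorisation together with stochastic completeness correctly gives $\Vert f*h_t\Vert_2^2\leq \Vert d_\sigma^{\gamma a}f\Vert_2^2\cdot\sup_{x}M(x,t)$, whereas the paper instead splits $f=f_r+f^r$ by a ball of radius $r=t^{1/(2\gamma)}$, handles $f^r$ by Plancherel and $f_r$ by Young plus H\"older, and feeds in the $\Vert h_t\Vert_2$ asymptotics of Theorem \ref{ver2}. However, your key claim $\sup_{x\in X}M(x,t)\leq Ct^{-a}$ is not established, and the gap is substantive. First, even in the regime you do treat, the near-diagonal rescaling $r=\sqrt{t}\,s$ produces $t^{-\gamma a}$, not $t^{-a}$; you need to invoke $\gamma\leq 1$ and restrict to $t\leq 1$ to convert $t^{-\gamma a}\leq t^{-a}$, and you never say this. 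Second, and more seriously, the lemma is needed for all $t>0$ (Theorem \ref{Vermutung3} optimises over $t$), and your sketch is silent on $t\geq 1$. There the Gaussian upper bound $h_t(x,y)\leq Ct^{-n/2}e^{-d(x,y)^2/(ct)}$ you invoke is false on these spaces (the correct large-time bound carries an extra $e^{-\rho^2 t}$ and polynomial corrections coming from the spectral gap), and the desired estimate $M(x,t)\lesssim t^{-a}$ at large $t$ is exactly where the hypothesis $\gamma\in D_\rho$, i.e.\ $\gamma\geq \tfrac12$ (strictly when $\rho<1$), must enter --- heuristically $M(\sigma,t)\sim(2\rho t)^{-2\gamma a}$ because Brownian motion escapes at linear speed, and $t^{-2\gamma a}\leq t^{-a}$ only when $2\gamma\geq 1$. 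Your argument never uses $\gamma\geq\tfrac12$ at all, which is a reliable sign that a whole regime is missing.

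There is also a tooling problem: the paper deliberately avoids pointwise heat kernel estimates, deriving everything from the Fourier transform $\widehat{h}_t^{\sigma}(\lambda)=e^{-t(\lambda^2+\rho^2)}$ and the $\mathbf{c}$-function asymptotics, because sharp pointwise Gaussian bounds (uniform over a general rank one harmonic manifold, for all times) are not among the established results here. To close your argument you would need to either import such bounds with justification, or bound $M(x,t)$ by spectral means --- for instance via $M(x,t)\leq\Vert d_\sigma^{-2\gamma a}\chi_{B_r(\sigma)}\Vert_{2}\Vert h_t\Vert_2+r^{-2\gamma a}$, which essentially reproduces the paper's computation with $V(r)$ and Theorem \ref{ver2}. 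As written, the proof is incomplete.
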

\begin{proof}

First, we are going to prove the assertion for radial functions and then adapt the critical steps in the proof to non-radial functions. 
For this interpret the heat kernel on $X$  $h_t$ again as a function on the positive half line.
Further assume $f=u \circ d_{\sigma}$ for $u\in L^2(\R _+,A(x)\,dx)$. 
 Let $r>0$, $u_r:=u\cdot \chi_{[0,r]},~ u^r:=u-u_r$ then  
\begin{align*}
\vert u^{r}(x)\vert\leq r^{-\gamma a}\vert x^{\gamma a}u(x)\vert\quad \forall x\in \R _+.
\end{align*}
This is because in the case $x\leq r$  the inequality above is trivial and in the case $x>r$ we have $(\frac{x}{r})^{\gamma a}\geq 1$. 
Using Lemma \ref{heat2} and the Plancherel theorem we get on  one hand:
\begin{align*}
    \Vert u^r *h_t\Vert_2 &\leq \Vert e^{-t(\lambda^2+\rho^2)}\widehat{u}^r(\lambda)\Vert_2\\
    &\leq\Vert \widehat{u}^{r}\Vert_2=\Vert u^r\Vert_2\\
    &\leq r^{-\gamma a}\Vert x^{\gamma a}u\Vert_2.
\end{align*}
On the other hand by the Young inequality:
\begin{align*}
\Vert u_r*h_t\Vert_2\leq \Vert h_t\Vert_2 \Vert u_r\Vert_1.
\end{align*}
With the H\"older inequality we obtain:
\begin{align*}
\Vert u_r \Vert_1 &=\Vert \chi_{[0,r]}\frac{x^{\gamma a}}{x^{\gamma a}}u\Vert_1\\
&=\Vert\chi_{[0,r]}\cdot x^{-\gamma a}\cdot x^{\gamma a}u\Vert_1\\
&\leq \Vert \chi_{[0,r]}\cdot x^{-\gamma a}\Vert_2 \cdot \Vert x^{\gamma a}u\Vert_2\\
&=\Bigl(\int_0^{r}x^{-2\gamma a}A(x)\,dx\Bigr)^{1/2}\Vert x^{\gamma a}u\Vert_2.
\end{align*}
Hence, we conclude:
\begin{align*}
\Vert u_r*h_t\Vert_2\leq \Vert h_t \Vert_2 \cdot \Bigl(\int_0^{r}x^{-2\gamma a}A(x)\,dx\Bigr)^{1/2}\Vert x^{\gamma a}u\Vert_2.
\end{align*}
Since $X$ has purely exponential volume growth and $A(r)$ is the determinant of the Jacobi tensor with initial conditions $A_v(0)=0$ and $A'(0)=\operatorname{id}$ we have $A(x)\sim e^{2\rho x}$ as $x\to\infty$ and $A(x)\sim x^{2\alpha +1}$ as $x\to 0$.
So in the case $r<1$ we get with the assumption on $\gamma a$:
\begin{align*}
    \Bigl(\int_{0}^{r}x^{-2\gamma a}\cdot A(x)\,dx\Bigr)^{1/2}&\leq c_{1,1}\cdot \Bigl(\int_{0}^{r}x^{-2\gamma a}\cdot x^{2\alpha +1}\,dx\Bigr)^{1/2}\\
    &\leq c_{1,2}\cdot r^{\alpha +1}\cdot r^{-\gamma a}
\end{align*}
for some constants $c_{1,1},c_{1,2}>0$.
And in the case $r\geq1$ we get:
\begin{align*}
    \Bigl(\int_{0}^{r}x^{-2\gamma a}\cdot A(x)\,dx\Bigr)^{1/2}&\leq c_{2,1}\cdot \Bigl(\int_{0}^{r}x^{-2\gamma a}\cdot e^{2\rho x}\,dx\Bigr)^{1/2}\\
    &\leq c_{2,2}\cdot r^{1/2}\cdot e^{\rho r}\cdot r^{-\gamma a}
\end{align*}
for some constants $c_{2,1},c_{2,2}>0$.
Hence, with $c=\max\{c_{1,2},c_{2,2}\}$, we define 
\begin{align*} V(r)=\begin{cases}
c \cdot r^{\alpha +1}&\text{if}~r<1\\
c \cdot r^{1/2}e^{pr}&\text{if}~r\geq 1.
\end{cases}
\end{align*}
By the triangle inequality, we get:
\begin{align*}
\Vert u*h_t\Vert_2&\leq \Vert u_r *h_t\Vert_2+\Vert u^r *h_t\Vert_2\\
&\leq C r^{-\gamma a}(1+\Vert h_t\Vert_2 V(r)\Vert x^{\gamma a}u\Vert_2),
\end{align*}
for $C>0$ only dependant on the variables $\gamma, a$.
If we now choose $r=t^{1/(2\gamma)}$ and use the bounds on the heat kernel from \ref{ver2} to get the desired inequality for radial functions.

It remains to consider the non-radial case. For $f\in L^2(X)$ non-radial consider the geodesic ball $B_r=\{y\in X\vert~d_\sigma (y)\leq r\}$, $f_r:=f\cdot \chi_{B_r}$ and $f^r:=f-f_r$. Then we obtain via the same line of argument as before: \begin{align*}
 \Vert f^r *h_t(\sigma,\cdot)\Vert_2 \leq  r^{-\gamma a}\Vert d_\sigma ^{\gamma a}f\Vert_2
\end{align*}
Since in Lemma \ref{young} we showed that the Young inequality also holds for the convolution of a non-radial with a radial function, we get:
\begin{align*}
\Vert f_r*h_t(\sigma,\cdot)\Vert_2\leq \Vert h_t(\sigma,\cdot) \Vert_2 \cdot  \Bigl(\int_0^{r}x^{-2\gamma a}A(x)\,dx\Bigr)^{1/2}\Vert d_\sigma^{\gamma a}f\Vert_2.
\end{align*}
 The same argument as in the radial case concludes the proof. 
\end{proof}

\begin{satz}\label{Vermutung3}  Let $(X,g)$ be a non-compact simply connected harmonic manifold of rank one, with mean curvature of the horosphere $2\rho$. 
And $D_\rho:=(\frac{1}{2},1]$ for $0<\rho<1$ or $D_\rho:=[\frac{1}{2},1]$ for $\rho\geq1$. Assume $a,b>0$, $\gamma\in D_\rho$ and $\sigma\in X$. Then there exist a constant $C>0$ such that:
$$
C\Vert f\Vert_2 \leq \Vert d_{\sigma}^{\gamma a}f\Vert_2 ^{b/(a+b)}\Vert (\lambda^2+\rho^2)^{b/2}\tilde{f}^{\sigma}\Vert_2 ^{a/(a+b)}
$$ for all $f \in 
 L^2(X)$, where $d_{\sigma}$ is the distance function  centred at $\sigma\in X$ and   $n\geq 6$ the dimension of $X$. 
\end{satz}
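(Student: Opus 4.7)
The strategy is a classical interpolation between a physical-space bound and a spectral bound, with the heat semigroup as the bridge. Set $h_{t}^{\sigma}(y) := h_{t}(\sigma, y)$ and split, for any $t > 0$ to be chosen later,
$$\Vert f\Vert_{2} \leq \Vert f - f*h_{t}^{\sigma}\Vert_{2} + \Vert f*h_{t}^{\sigma}\Vert_{2}.$$
Write $A := \Vert (\lambda^{2}+\rho^{2})^{b/2}\tilde{f}^{\sigma}\Vert_{2}$ and $B := \Vert d_{\sigma}^{\gamma a}f\Vert_{2}$; both may be assumed finite, otherwise the inequality is trivial.

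The second summand is immediately controlled by Lemma \ref{heis1}, which under the hypothesis $\gamma \in D_{\rho}$ yields
$$\Vert f*h_{t}^{\sigma}\Vert_{2} \leq C_{1}\, t^{-a/2}\, B.$$
For the first summand I would combine Lemma \ref{heat2} (extended from $L^{1}$ to $L^{2}$ by density of $C_{c}^{\infty}(X)$, since both sides depend continuously on $f \in L^{2}$) with the Plancherel theorem \ref{Placherel Theorem} to obtain
$$\Vert f - f*h_{t}^{\sigma}\Vert_{2}^{2} = C_{0}\int_{0}^{\infty}\int_{\partial X}\bigl(1 - e^{-t(\lambda^{2}+\rho^{2})}\bigr)^{2}|\tilde{f}^{\sigma}(\lambda,\xi)|^{2}|\mathbf{c}(\lambda)|^{-2}\,d\mu_{\sigma}(\xi)\,d\lambda.$$
The elementary estimate $(1 - e^{-y})^{2} \leq C_{b}\, y^{b}$ for $y \geq 0$ (which follows, for $b \in (0,2]$, from $1-e^{-y} \leq \min(y,1)$) applied pointwise with $y = t(\lambda^{2}+\rho^{2})$ then gives
$$\Vert f - f*h_{t}^{\sigma}\Vert_{2} \leq C_{2}\, t^{b/2}\, A.$$

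Adding the two bounds gives $\Vert f\Vert_{2} \leq C_{1}\, t^{-a/2} B + C_{2}\, t^{b/2} A$, and optimizing in $t$ (choosing $t$ of the order $(B/A)^{2/(a+b)}$, so that the two summands balance) yields
$$\Vert f\Vert_{2} \leq C\, B^{b/(a+b)}\, A^{a/(a+b)},$$
which is the assertion.

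\textbf{Main obstacle.} The substantive geometric input sits in Lemma \ref{heis1}; once that lemma is granted, what remains is the soft interpolation argument above. The remaining care is purely technical: justifying the Plancherel identity for $f - f*h_{t}^{\sigma}$ when $f$ lies only in $L^{2}(X)$ (handled by $C_{c}^{\infty}$-density together with Young's inequality from Lemma \ref{young}, which ensures $f*h_{t}^{\sigma} \in L^{2}$), and keeping track of the constants in the optimization step so that the exponents on $A$ and $B$ are exactly $a/(a+b)$ and $b/(a+b)$.
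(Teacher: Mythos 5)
Your argument reproduces exactly the core step of the paper's proof (its ``Step 1''): split $\Vert f\Vert_2 \leq \Vert f*h_t^{\sigma}\Vert_2 + \Vert f - f*h_t^{\sigma}\Vert_2$, control the first term by Lemma \ref{heis1}, control the second via Plancherel and the boundedness of $(1-e^{-s})s^{-b/2}$, then optimise in $t$. That part is fine. But as written it does not prove the theorem in the stated generality, and the gap is in the hypotheses you silently assume.

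First, Lemma \ref{heis1} is \emph{not} available under the hypothesis $\gamma\in D_\rho$ alone: it additionally requires $\gamma a < \alpha+1$ with $\alpha=\frac{n-2}{2}$, whereas the theorem allows arbitrary $a>0$. Your sentence ``immediately controlled by Lemma \ref{heis1} under the hypothesis $\gamma\in D_\rho$'' skips this restriction. Second, your elementary bound $(1-e^{-y})^2\leq C_b\,y^{b}$ genuinely fails for $b>2$ (near $y=0$ one has $(1-e^{-y})^2\sim y^2$, so the quotient blows up), and you acknowledge the restriction $b\in(0,2]$ without resolving it — but the theorem is claimed for all $b>0$. So your proof establishes the inequality only for the parameter range $\gamma a<\alpha+1$, $b\leq 2$. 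The paper closes both gaps by two further reduction steps: for $b>2$ it uses the pointwise inequality $u^{b'}\leq 1+u^{b}$ with $u=((\lambda^2+\rho^2)/\epsilon)^{1/2}$ and optimises in $\epsilon$ to get $\Vert(\lambda^2+\rho^2)^{b'/2}\tilde{f}^{\sigma}\Vert_2\leq\Vert f\Vert_2^{1-b'/b}\Vert(\lambda^2+\rho^2)^{b/2}\tilde{f}^{\sigma}\Vert_2^{b'/b}$, which lets it bootstrap the case $b'\leq 2$ to general $b$; an entirely analogous interpolation in the weight $d_\sigma^{\gamma a}$ handles $\gamma a\geq\alpha+1$. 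You need to add both of these reductions (or an equivalent device) for the proof to cover the full range of $a$ and $b$ in the statement.
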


\begin{proof}
The proof is going to be split into three steps.\\
\underline{Step 1:} Assume $\gamma\in D_p$, $a$ such that $\gamma a<\alpha+1$ and $b\leq 2$. Then by the triangle inequality, Lemma \ref{heis1} and the Plancherel Theorem  we get: 
\begin{align*}
    \Vert f\Vert_2&\leq \Vert f*h_t(\sigma,\cdot)\Vert_2+\Vert f-f*h_t(\sigma,\cdot)\Vert_2\\
    &\leq C_{0}t^{-a/2}\Vert d_{\sigma}^{\gamma a}f\Vert_2+\Vert(1-e^{-t(\lambda^2+\rho^2)})\tilde{f}^{\sigma}(\lambda)\Vert_2\\
    &= C_{0}t^{-a/2}\Vert d_{\sigma}^{\gamma a}f\Vert_2\\&+
    \Vert(1-e^{-t(\lambda^2+\rho^2)})(t(\lambda^2+\rho^2))^{-b/2}(t(\lambda^2+\rho^2))^{b/2}\tilde{f}^{\sigma}(\lambda)\Vert_2.
\end{align*}
For $b\leq2$ the function $(1-e^{s})s^{-b/2}$ is bounded for $s\geq0$. Hence, we obtain:
\begin{align*}
\Vert f\Vert_2\leq C(t^{-a/2}\Vert d_{\sigma}^{\gamma a}f\Vert_2+t^{b/2}\Vert(\lambda^2+\rho^2)^{b/2}\tilde{f}^{\sigma}(\lambda)\Vert_2).
\end{align*}
Optimising in $t$ we get Theorem \ref{Vermutung3}  for $\gamma a<\alpha+1$ and $b\leq 2$.\\

\underline{Step 2:} Let $b>2$ and $b^{\prime}\leq 2$. Then for $u>0$ 
\begin{align*}
u^{b^{\prime}}\leq1+u^{b}.
\end{align*}
For $\epsilon>0$ set $u=((\lambda^2+\rho^2)/\epsilon)^{1/2}$. This yields:
\begin{align*}
\lVert (\lambda^2+\rho^2)^{b^{\prime}/2}\tilde{f}^{\sigma}\rVert_2\leq \epsilon^{b^{\prime}/2}\lVert f\rVert_2+\epsilon^{(b^{\prime}-b)/2}\lVert (\lambda^2+\rho^2)^{b/2}\tilde{f}^{\sigma}\rVert^{b^{\prime}/b}.
\end{align*}
 Optimising in $\epsilon$ we obtain:
\begin{align*}
\Vert (\lambda^2+p^2)^{b^{\prime}/2}\tilde{f}^{\sigma}\Vert_2\leq \Vert f\Vert_2^{(1-b^{\prime}/b)}\Vert (\lambda^2+\rho^2)^{b/2}\tilde{f}^{\sigma}\Vert_2^{b^{\prime}/b}.
\end{align*}
Applying  Theorem \ref{Vermutung3} for $b^{\prime}$ gives us Theorem \ref{Vermutung3} for $b$.\\
\underline{Step 3:} Assume $\gamma a\geq \alpha +1$ and choose $a^\prime$ such that $\gamma a^\prime<\alpha+1$. Then again we have for $\epsilon>0$:
\begin{align*}
\frac{x^{\gamma a^{\prime}}}{\epsilon^{\gamma a^{\prime}}}\leq 1+\frac{x^{\gamma a}}{\epsilon^{\gamma a}}
\end{align*}
and get:
\begin{align*}
\lVert d_\sigma^{\gamma a}f\lVert \leq \epsilon^{\gamma-a^\prime}\lVert f \rVert_2 \epsilon^{\gamma(a^\prime-a)}+\lVert d_\sigma^{\gamma a}f\rVert_2.
\end{align*}
Optimising in $\epsilon$  and Theorem \ref{Vermutung3}  for $a^\prime$ yields the result.
\end{proof}
\begin{bem}
  Note that due to the construction the constant $C$ heavily depends on $a$ and $b$.  
\end{bem}

\section{Morgan Theorem}\label{s8}

Consider the Fourier transform $\mathcal{F}^n$ on $\R^n$. Then we have the following uncertainty principle. 
\begin{satz}[\cite{bagchiray1998}]\label{thm:morgr}
Let $f:\R^n\to\C$ be a measurable function and assume that for all $x,\xi \in \R^n$ 
\begin{enumerate}
\item$ \lvert f(x)\rvert\leq Ce^{-\alpha\lvert x\rvert^{p}}$
\item $\lvert \mathcal{F}^n(f)(\xi)\rvert\leq Ce^{-\beta\lvert\xi\rvert^{q}},$
\end{enumerate}
where $C,\alpha,\beta>0$, $1<p,q<\infty$ and $\frac{1}{p}+\frac{1}{q}=1$
if $(\alpha p)^p (\beta q)^q >1$. Then $f=0$ almost everywhere.
\end{satz}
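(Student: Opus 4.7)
The plan is the standard complex-analytic approach to Hardy--Morgan type theorems. The decay of $f$ forces $\mathcal{F}^n(f)$ to extend as an entire function of order at most $q$ in each complex direction, and the decay hypothesis on $\mathcal{F}^n(f)$ is then combined with this via a Phragm\'en--Lindel\"of argument in an appropriate sector of $\C$.

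First I would reduce to the one-dimensional case. Fix a direction $\omega \in S^{n-1}$ and form the slice
\[
F_{\omega}(t) := \int_{\omega^{\perp}} f(t\omega + y)\, dy.
\]
By the projection--slice identity, $\widehat{F_{\omega}}(s) = \mathcal{F}^{n}(f)(s\omega)$. The hypotheses transfer to $|F_{\omega}(t)| \le C e^{-\alpha'|t|^{p}}$ (for any $\alpha' < \alpha$, after absorbing constants produced by the integration over $\omega^{\perp}$) and $|\widehat{F_{\omega}}(s)| \le C e^{-\beta|s|^{q}}$; shrinking $\alpha'$ by an arbitrarily small amount preserves strict inequality in $(\alpha' p)^{p}(\beta q)^{q} > 1$.

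Second, I would use the Legendre duality
\[
\sup_{t \ge 0}\bigl(yt - \alpha' t^{p}\bigr) \;=\; \frac{y^{q}}{q(\alpha' p)^{q-1}}
\]
to extend $\widehat{F_{\omega}}$ as an entire function on $\C$ with
\[
\bigl|\widehat{F_{\omega}}(z)\bigr| \;\le\; C' \exp\!\Bigl(\tfrac{|\operatorname{Im} z|^{q}}{q(\alpha' p)^{q-1}}\Bigr),
\]
so $\widehat{F_{\omega}}$ is of order $q$ and finite type. I would then form $g(z) := \widehat{F_{\omega}}(z)\exp(\beta z^{q})$ on a sector of aperture $\pi/q$ with the principal branch of $z^{q}$. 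On the positive real axis $|g|$ is bounded by hypothesis; on the opposite boundary ray one has $\operatorname{Re}(z^{q}) = -|z|^{q}$, so there the two exponential factors compete, and the quantitative condition $(\alpha p)^{p}(\beta q)^{q} > 1$ is exactly what forces the growth of $\widehat{F_{\omega}}$ to be dominated by $e^{-\beta|z|^{q}}$. Phragm\'en--Lindel\"of in this sector then yields that $g$ is bounded, giving super-polynomial decay of $\widehat{F_{\omega}}$ along every interior ray; combined with the entire-function structure (a Liouville-type argument for order-$q$ entire functions with controlled indicator) this forces $\widehat{F_{\omega}} \equiv 0$. Since $\omega$ was arbitrary, $\mathcal{F}^{n}(f) \equiv 0$ and hence $f = 0$ almost everywhere by Fourier inversion.

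The main obstacle will be the constant bookkeeping in the Phragm\'en--Lindel\"of step: the sharp form of Morgan's theorem involves a trigonometric factor $|\cos(q\pi/2)|^{1/q}$ coming from the boundary computation $\operatorname{Re}(z^{q}) = |z|^{q}\cos(q\arg z)$, and the hypothesis $(\alpha p)^{p}(\beta q)^{q} > 1$ is the convenient non-sharp version obtained after absorbing such factors. Matching the two requires a careful choice of branch of $z^{q}$ and sector aperture, and the argument typically splits into subcases depending on whether $1<q<2$, $q=2$, or $q>2$, since the borderline PL statement for functions of exact order $q$ in a sector of aperture $\pi/q$ needs to be invoked with a slightly smaller aperture and a limiting argument.
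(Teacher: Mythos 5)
You should first note that the paper itself contains no proof of this statement: it is imported verbatim from \cite{bagchiray1998} and used as a black box in the proof of Theorem \ref{morgan}, so there is no in-paper argument to compare against. Your outline is the standard complex-analytic proof of Morgan's theorem and is essentially what the cited literature does: reduction to one variable by integrating over hyperplanes (the projection--slice identity, i.e.\ the Euclidean Radon transform), holomorphic extension of the one-dimensional transform to an entire function of order $q$ via the Legendre duality $\sup_{t\ge 0}(yt-\alpha t^{p})=y^{q}/(q(\alpha p)^{q-1})$ (which you state correctly), and a Phragm\'en--Lindel\"of argument in a sector of aperture $\pi/q$. Two caveats. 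First, your boundary computation on the ray $\arg z=\pi/q$ yields the condition $\beta q(\alpha p)^{q-1}>1$, which is exactly $(\alpha p)^{1/p}(\beta q)^{1/q}>1$ --- the form of the hypothesis used in Theorem \ref{morgan} and in the classical statement --- and \emph{not} the condition $(\alpha p)^{p}(\beta q)^{q}>1$ printed here; these are inequivalent, and the printed exponents appear to be a typo, so your argument proves the theorem under the former hypothesis. Second, the Phragm\'en--Lindel\"of step is genuinely borderline: $g(z)=\widehat{F_{\omega}}(z)e^{\beta z^{q}}$ has growth of order exactly $q$ inside a sector of aperture exactly $\pi/q$, so the naive maximum principle does not apply and one must run the trigonometric-convexity argument for the indicator $h(\theta)=\limsup_{r\to\infty}r^{-q}\log\lvert\widehat{F_{\omega}}(re^{i\theta})\rvert$, which is where the cosine factors and the case distinction between $q<2$ and $q>2$ enter. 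You correctly identify this as the main obstacle but do not carry it out, so the proposal is a sound roadmap for the classical result rather than a complete proof; since the paper only cites the result, this level of detail is arguably all that is needed, but the exponent discrepancy should be fixed in the statement.
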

The authors in \cite{uncertaintysym} generalised this to non-compact symmetric spaces: 
Let $G$ be a connected non-compact semi-simple Lie Group with finite centre, $K\subset G$ a maximal compact subgroup and $X'=G/K$ the associated Riemann symmetric space of non-compact type. Denote by $o=eK$ the origin in $X$ and by $d:X'\times X'\to\R$ the Riemannian distance. Furthermore, let $KAN$ be the Iwasawa decomposition and $\mathcal{A}$ the Lie algebra of $A$ and let its dual be denoted by $\mathcal{A}^*$. Now let $M$ be the centraliser of $A$ in $K$ and $B=K/M$ the boundary of $X'$. 
\begin{satz}[\cite{uncertaintysym}]
Let $f:X'\to \C$ be a measurable function and assume that for all $\lambda\in \mathcal{A}^*,b\in B\text{ and } x\in X'$ we have
\begin{enumerate}
\item$ \lvert f(x)\rvert\leq Ce^{-\alpha d(o,x)^{p}}$,
\item $\lvert \tilde{f}(\lambda,b)\rvert\leq Ce^{-\beta\lvert\lambda\rvert^{q}}$,
\end{enumerate}
where $C,\alpha,\beta>0$, $1<p,q<\infty$ and $\frac{1}{p}+\frac{1}{q}=1$.
If $(\alpha p)^p (\beta q)^q >1$. Then $f=0$ almost everywhere.
\end{satz}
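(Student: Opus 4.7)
The plan is to reduce to the one-dimensional Morgan theorem (Theorem \ref{thm:morgr}) via the Radon transform, exploiting the factorisation of the Helgason Fourier transform. The analogue of Remark \ref{lemma1}(1) in the rank-one symmetric setting gives $\tilde f(\lambda,b) = \mathcal{F}(\mathcal{R} f(\cdot,b))(\lambda)$, where $\mathcal{R}$ is the (suitably normalised) horospherical Radon transform and $\mathcal{F}$ is the Euclidean Fourier transform on $\R$. The strategy is therefore to transfer both hypotheses from $f$ to $\mathcal{R} f(\cdot,b)$ and then apply Theorem \ref{thm:morgr} in one variable.

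First I would fix $b \in B$ and transfer the spatial decay. On the horosphere $H^s_b$ the Busemann function is constant and equal to $s$, so every point $x \in H^s_b$ satisfies $d(o,x) \geq |s|$. Combined with $|f(x)| \leq C e^{-\alpha d(o,x)^p}$ and the fact that the factor $e^{-\rho s}$ in the Radon normalisation cancels the exponential growth of the horospherical volume element, this yields
\begin{align*}
|\mathcal{R} f(s,b)| \leq C_\varepsilon\, e^{-(\alpha-\varepsilon)|s|^p}
\end{align*}
for any $\varepsilon>0$, after absorbing polynomial prefactors into the exponent (using $p>1$). On the Fourier side the identity above means $|\mathcal{F}(\mathcal{R} f(\cdot,b))(\mu)| \leq C e^{-\beta|\mu|^q}$ is already the hypothesis. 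Choosing $\varepsilon$ small enough to preserve the strict inequality $((\alpha-\varepsilon)p)^p(\beta q)^q > 1$, Theorem \ref{thm:morgr} applied to the one-dimensional function $\mathcal{R} f(\cdot,b)$ forces $\mathcal{R} f(\cdot,b) = 0$ for almost every $b \in B$. Injectivity of the Radon transform, which follows from the Plancherel theorem together with the factorisation $\tilde f = \mathcal{F}\circ\mathcal{R}$, then yields $f=0$ almost everywhere.

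The main obstacle I anticipate is the first step: the degradation $\alpha \rightsquigarrow \alpha-\varepsilon$ consumes a bit of slack, and one must verify this is compatible with the strict inequality in the Euclidean theorem --- harmless here, but it must be tracked. A more robust alternative, and the route taken in \cite{uncertaintysym}, bypasses the Radon reduction and runs a Phragm\'en--Lindel\"of argument directly on the entire extension of $\lambda\mapsto \tilde f(\lambda,b)$: the decay of $f$ yields order-$q$ growth in the imaginary direction of $\lambda$, the hypothesis provides order-$q$ decay on the real axis, and the condition $(\alpha p)^p(\beta q)^q > 1$ is exactly the gap needed to force the entire function to vanish. This path avoids the loss of $\varepsilon$ altogether and is the one that generalises most cleanly to higher rank and, with the bounds from Lemma \ref{lemma:bounds}, to the harmonic setting treated later in this section.
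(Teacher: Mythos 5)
The paper itself gives no proof of this statement---it is quoted from \cite{uncertaintysym} as background---but your Radon-transform reduction is precisely the strategy the paper uses for its own generalisation to harmonic manifolds: your first step is Proposition \ref{prop:boundradon} (transfer of the Gaussian-type decay to $\mathcal{R}f(\cdot,b)$ at the cost of $\alpha\rightsquigarrow\alpha_1<\alpha$), and your second step is the proof of Theorem \ref{morgan} (apply the Euclidean Morgan theorem \ref{thm:morgr} in the $s$-variable, then conclude by $\tilde f=\mathcal{F}\circ\mathcal{R}$ and Plancherel). Your proposal is correct and essentially identical in approach, including the observation that the $\varepsilon$-loss is harmless because the hypothesis is a strict inequality.
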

The following classical result from \cite{GeoHoro} allow us to compare the distance on horospheres with the ambient distance and is the main reason we can only consider harmonic manifolds with pinched negative curvature. 
\begin{lemma}[{\cite[Theorem 4.6]{GeoHoro}}]\label{lemma:GeoHoro}
Let $(M,g)$ be a Hadarmard manifold with pinched negative curvature, $d$ the Riemannian distance and $h$  the induced distance on a horosphere. Let $p$ and $q$ be two points on the horosphere then there exist constants $C_1,C_2>0$ such that:
\begin{align*}
C_1 d(p,q)\leq h(p,q)\leq C_2 d(p,q)
\end{align*}
for all $p,q\in M$. 
\end{lemma}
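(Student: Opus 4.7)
The lower bound $d(p,q) \leq h(p,q)$ is automatic, since $d(p,q)$ is the infimum of lengths of curves in $M$ joining $p$ to $q$ while $h(p,q)$ is the infimum over the strictly smaller class of curves constrained to lie on the horosphere $H$. Thus one may take $C_1 = 1$ with no further work. The substance of the lemma is the upper bound $h(p,q)\leq C_2 d(p,q)$, for which the plan is to project the ambient geodesic onto $H$ and bound the length of the projected curve.

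Concretely, write $H=B_{\xi,\sigma}^{-1}(s)$ and $N=-\nabla B_{\xi,\sigma}$. Let $\phi_u$ denote the flow along $N$, and define the horospherical projection $\Pi:M\to H$ by $\Pi(x)=\phi_{B_{\xi,\sigma}(x)-s}(x)$; this is a smooth retraction onto $H$. If $\gamma:[0,L]\to M$ is the unit-speed ambient geodesic from $p$ to $q$, then $c(t):=\Pi(\gamma(t))$ is a curve on $H$ joining $p$ to $q$, and so $h(p,q)\leq \mathrm{length}(c)$. The task is then to produce a bound $|\dot c(t)|\leq C_2$ for almost every $t$, which integrated over $[0,L]$ gives the desired estimate.

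The two ingredients needed for this bound both depend critically on the pinched curvature assumption $-b^2\leq K\leq -a^2$. First, the Hessian of $B_{\xi,\sigma}$ equals the shape operator of the horospheres through $x$ and, by standard Riccati/Jacobi comparison under pinched curvature, has eigenvalues in $[a,b]$; in particular $B_{\xi,\sigma}\circ\gamma$ is strictly convex, so the deviation $|B_{\xi,\sigma}(\gamma(t))-s|$ is controlled on $[0,L]$ by its boundary values (which are zero) and the convexity constant. Second, the differential $d\Pi_x$, acting on a horospherical tangent vector, is computed by transporting a Jacobi field along the normal $N$-flow from $x$ to $\Pi(x)$; by the Rauch comparison theorem applied with the two curvature bounds, its norm is distorted by a factor controlled by $e^{C|B_{\xi,\sigma}(x)-s|}$. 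Combining these estimates with a decomposition of $\dot\gamma(t)$ into its horospherical-tangent and normal components (the latter being annihilated by $d\Pi$) produces the required bound on $|\dot c(t)|$.

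The main obstacle is the Jacobi-field bookkeeping in the second ingredient: one must carry the horospherical splitting of $T_x M$ along the entire geodesic $\gamma$ and verify that the exponential distortion factor remains uniformly bounded. This is exactly where pinched negative curvature is essential: the upper bound $K\leq -a^2$ furnishes the convexity needed to control the deviation of $\gamma$ from $H$, while the lower bound $K\geq -b^2$ prevents the Jacobi fields transverse to the horosphere foliation from expanding faster than $e^{b\cdot|B_{\xi,\sigma}(\gamma(t))-s|}$. Without the lower curvature bound no such uniform control exists, which is why the argument, and with it the Morgan theorem of Section \ref{s8}, is restricted to harmonic manifolds of pinched negative curvature rather than general purely exponential volume growth.
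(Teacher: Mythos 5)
The paper offers no proof of Lemma \ref{lemma:GeoHoro}; it is quoted from \cite{GeoHoro}, so the only thing to check is whether your argument actually establishes the stated inequalities. Your lower bound is fine: $d(p,q)\leq h(p,q)$ holds for any submanifold with the induced intrinsic metric, so $C_1=1$ works. The gap is in the upper bound, at the step where you claim that convexity of $B_{\xi,\sigma}\circ\gamma$ controls the deviation $\lvert B_{\xi,\sigma}(\gamma(t))-s\rvert$ uniformly. Convexity (from $\operatorname{Hess}B_{\xi,\sigma}\geq 0$) only tells you that $B_{\xi,\sigma}\circ\gamma$ lies below its chord, i.e.\ the geodesic dips into the horoball; it gives no lower bound on how deep it dips. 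The only a priori control is the Lipschitz bound $\lvert\nabla B_{\xi,\sigma}\rvert=1$, which gives $\lvert B_{\xi,\sigma}(\gamma(t))-s\rvert\leq d(p,q)/2$, and this is sharp: in $\mathbb{H}^2$ the geodesic joining two points of a horocycle at ambient distance $d$ penetrates to depth roughly $d/2$. Feeding that into your Rauch distortion factor $e^{b\lvert B_{\xi,\sigma}-s\rvert}$ yields $h(p,q)\leq d(p,q)\,e^{b\,d(p,q)/2}$, an exponential bound, not a linear one with a uniform $C_2$.

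This is not a defect you can repair, because the statement as transcribed is false for unbounded $d(p,q)$: on the horocycle $y=1$ in the upper half-plane model of $\mathbb{H}^2$ one computes $h(p,q)=2\sinh\bigl(d(p,q)/2\bigr)$, so $h/d\to\infty$. What Theorem 4.6 of \cite{GeoHoro} actually asserts, for $-b^2\leq K\leq -a^2<0$, is the two-sided comparison $\frac{2}{b}\sinh\bigl(\frac{b}{2}d(p,q)\bigr)\leq h(p,q)\leq \frac{2}{a}\sinh\bigl(\frac{a}{2}d(p,q)\bigr)$; a linear upper bound $h\leq C_2 d$ is available only on regions where $d(p,q)$ stays bounded. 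Your projection-along-the-normal-flow strategy, with the Riccati bounds $a\leq\operatorname{Hess}B_{\xi,\sigma}\rvert_{TH}\leq b$ integrated along the $N$-flow, is in fact the right mechanism for proving the correct $\sinh$ version, so the fix is to state and prove that version (and then to revisit the use of the lemma in Proposition \ref{prop:boundradon}, where the integrand must absorb the resulting exponential rather than linear comparison).
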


\begin{prop}\label{prop:boundradon}
Let $f\in L^1(X)\cap L^2(X)$  satisfy the conditions of  Theorem \ref{morgan} for some point $\sigma\in X$. Then for each $\alpha_1$ with $0<\alpha_1<\alpha$ there exists a constant $E>0$ such that:
$$ \lvert \mathcal{R}_\sigma(f)(s,\xi)\rvert\leq E\cdot e^{-\alpha_1 \lvert s\rvert^p}\quad \forall (s,\xi)\in \R\times \partial X.$$
 If $f\in L^1(X)$  satisfies the conditions of Theorem \ref{thm:schroe} then for each $\alpha_1$ with $0<\alpha_1<\alpha$ there exists a constant $E>0$ such that:
$$ \lvert \mathcal{R}_\sigma(f)(s,\xi)\rvert\leq E\cdot e^{-\alpha_1 \lvert s\rvert^2}\quad \forall (s,\xi)\in \R\times \partial X.$$

\end{prop}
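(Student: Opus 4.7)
The plan is to bound $|\mathcal{R}_\sigma(f)(s,\xi)|$ directly from its defining integral over the horosphere $H_{\xi,\sigma}^s$, combining the super-exponential decay of $f$ with geometric control of the distances from $\sigma$ to points on $H_{\xi,\sigma}^s$. The key geometric observation is that the orthogonal projection $z_0$ of $\sigma$ onto $H_{\xi,\sigma}^s$ exists and is unique (because horoballs are convex in non-positive curvature), coincides with the intersection of the geodesic from $\sigma$ toward $\xi$ with the horosphere, and satisfies $d(\sigma,z_0)=|s|$ with the geodesic $\sigma z_0$ perpendicular to $H_{\xi,\sigma}^s$. Under pinched negative curvature a Toponogov-type comparison applied to the right triangle $\sigma z_0 z$ against a model of constant curvature $-k$ will yield
\begin{align*}
d(\sigma,z) \geq |s| + d(z_0,z) - K \qquad \text{for all } z\in H_{\xi,\sigma}^s,
\end{align*}
for some $K>0$ depending only on the curvature bounds; Lemma \ref{lemma:GeoHoro} then allows one to pass between the ambient and the intrinsic horospherical distances up to multiplicative constants.

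Starting from
\begin{align*}
|\mathcal{R}_\sigma(f)(s,\xi)| \leq C\,e^{-\rho s}\int_{H_{\xi,\sigma}^s} e^{-\alpha d(\sigma,z)^p}\,dH_{\xi,\sigma}^s(z),
\end{align*}
I would pick $\alpha_2\in(\alpha_1,\alpha)$ and, on the set $\{z:d(z_0,z)\geq K\}$, apply the elementary superadditivity $(a+b)^p\geq a^p+b^p$ for $a,b\geq 0$ and $p\geq 1$, with $a=|s|$ and $b=d(z_0,z)-K$, to split
\begin{align*}
e^{-\alpha d(\sigma,z)^p}\leq e^{-\alpha_2 |s|^p}\cdot e^{-(\alpha-\alpha_2)(d(z_0,z)-K)^p}.
\end{align*}
Using the change of variables $\int_{H^s}g\,dH^s = e^{2\rho s}\int_{H^0}g\circ\Psi_{\xi,s}\,dH^0$, Lemma \ref{lemma:GeoHoro}, and the at most exponential intrinsic volume growth of horospheres, the $s$-dependent integral of the second factor is controlled by $C'' e^{2\rho s}$ uniformly in $\xi$; the contribution from the small ball $\{d(z_0,z)\leq K\}$ is handled by the trivial bound $d(\sigma,z)\geq|s|$ paired with the same $e^{2\rho s}$ volume growth.

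Combining everything with the $e^{-\rho s}$ prefactor yields $|\mathcal{R}_\sigma(f)(s,\xi)| \leq C'''\,e^{\rho s - \alpha_2 |s|^p}$. Since $p>1$, Young's inequality absorbs the linear term $\rho s$ into any strictly smaller fraction of $\alpha_2|s|^p$: for any $\alpha_1<\alpha_2$ one has $\rho s - \alpha_2|s|^p \leq -\alpha_1|s|^p + C(\alpha_1,\alpha_2,p,\rho)$, and absorbing the additive constant into $E$ gives the stated bound $|\mathcal{R}_\sigma(f)(s,\xi)|\leq E e^{-\alpha_1 |s|^p}$. The Schrödinger case $p=2$ follows by the identical argument applied to the hypothesis of Theorem \ref{thm:schroe}. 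The main obstacle throughout is the geometric comparison $d(\sigma,z)\geq |s|+d(z_0,z)-K$: this is precisely where the pinched negative curvature assumption is essential, and it is the only non-elementary ingredient; every subsequent step reduces to superadditivity, the horospherical volume formula, and Young's inequality.
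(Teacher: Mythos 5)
Your overall architecture --- peel off $e^{-\alpha_1\lvert s\rvert^p}$ from the pointwise decay of $f$ using the position of $H^s_{\xi,\sigma}$ relative to $\sigma$, then show that the leftover horospherical integral is controlled uniformly in $s$ and $\xi$ --- is the same as the paper's, and your closing steps (superadditivity of $t\mapsto t^p$, the horospherical volume formula, and absorbing the linear term $\rho s$ into a fraction of $\alpha_2\lvert s\rvert^p$ using $p>1$) are sound. The problem is the one ingredient you yourself identify as the crux: the comparison $d(\sigma,z)\geq \lvert s\rvert + d(z_0,z)-K$ with $K$ uniform in $s$, $\xi$ and $z$ is false, already in the hyperbolic plane. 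In the upper half-plane take $\sigma=(0,1)$ and $\xi=\infty$, so that $B_{\xi,\sigma}(x,y)=-\log y$, $H^s=\{y=e^{-s}\}$ and $z_0=(0,e^{-s})$. For $s>0$ (the horosphere on the far side of $\sigma$ from $\xi$) and $z=(x,e^{-s})$ with $\lvert x\rvert$ large, one computes $d(\sigma,z)=2\log\lvert x\rvert+s+o(1)$ while $\lvert s\rvert+d(z_0,z)=2\log\lvert x\rvert+3s+o(1)$; the deficit $\lvert s\rvert+d(z_0,z)-d(\sigma,z)$, which is twice the Gromov product $(\sigma\,\vert\,z)_{z_0}$, grows like $2s$ and is not bounded by any constant. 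Hyperbolicity bounds that Gromov product by the distance from $z_0$ to the geodesic $[\sigma,z]$, but that distance is itself of order $\lvert s\rvert$ here, so no Toponogov-type argument in pinched negative curvature will produce a uniform $K$; as written the proof does not go through for one sign of $s$.

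The gap is repairable, and the repair essentially lands on the paper's proof. The paper uses only the trivial one-sided bound $\lvert s\rvert=\lvert B_{\xi,\sigma}(z)\rvert\leq d_\sigma(z)$ (the Busemann function is $1$-Lipschitz) to extract $e^{-\alpha_1\lvert s\rvert^p}$, and then spends the surplus decay $e^{-(\alpha-\alpha_1)d_\sigma(z)^p}$ on the remainder: the eigenfunction bounds of Lemma \ref{lemma:bounds} reduce the leftover integrand to $e^{-\rho d_\sigma(z)}$ up to bounded factors, and the one-sided comparison $h_s(\sigma(s),z)\lesssim d(\sigma,z)+s$ (which is just the triangle inequality combined with Lemma \ref{lemma:GeoHoro}) together with the uniform polynomial volume growth of horospheres bounds $\int_{H^s}e^{-\rho d_\sigma(z)}\,dH^s(z)$ by a multiple of $e^{\rho s}$, cancelling the prefactor $e^{-\rho s}$. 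If you prefer to stay within your own scheme, replace the two-sided comparison by the triangle inequality $d(z_0,z)\leq d(\sigma,z)+\lvert s\rvert$: the surviving factor $e^{-(\alpha-\alpha_2)\bigl(d(z_0,z)-\lvert s\rvert\bigr)_+^p}$ then costs only a region of integration whose volume grows polynomially in $\lvert s\rvert$, which the gap between $\alpha_1$ and $\alpha_2$ absorbs.
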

\begin{proof}
By the conditions on $f$ and the monotonicity of the integral we have:
\begin{align}
\lvert \mathcal{R}_\sigma(f)(s,\xi)\rvert&=\left\lvert e^{-\rho s}\int_{H_{\xi,\sigma}^s}f(x) \,dH_{\xi,\sigma}^s(x)\right\rvert\nonumber\\
&\leq C e^{-\rho s}\int_{H_{\xi,\sigma}^s} e^{-\alpha d_\sigma^{p}(x)}\,dH_{\xi,\sigma}^s(x).\label{4.43}
\end{align}
Let $\varphi_{0,\sigma}$ be the radial eigenfunction of the Laplacian with eigenvalue $\rho^2$. Then (\ref{4.43}) becomes
\begin{align}\label{1}
C e^{-\rho s}\int_{H_{\xi,\sigma}^s}\varphi_{0,\sigma}(x) e^{-\alpha d_\sigma^{p}(x)}\varphi_{0,\sigma}^{-1}\,dH_{\xi,\sigma}^s(x).
\end{align}
Since these eigenfunctions correspond to the eigenfunctions of the Ch\'ebli-Trim\`eche hypergroups on $(\R^{+},*)$ associated to $A(r)$, we get from Lemma \ref{lemma:bounds} 
\begin{align*}
e^{\rho d_{\sigma}(x)}\leq \varphi_{0,\sigma}(x)\Leftrightarrow \varphi_{0,\sigma}^{-1}(x)\leq e^{-\rho d_\sigma (x)}.
\end{align*}
Since the Busemann function is Lipschitz with Lipschitz-constant $1$, we get
\begin{equation}\label{2}
\rvert s\lvert=\lvert B_{\xi,\sigma}(x)\rvert\leq d_{\sigma}(x).
\end{equation}
Let $\epsilon=\alpha-\alpha_1$. Then we get:
\begin{align*}
&\int_{H_{\xi,\sigma}^s}\varphi_{0,\sigma}(x) e^{-\alpha d_\sigma^{p}(x)}\varphi_{0,\sigma}^{-1}(x)\,dH_{\xi,\sigma}^s(x)\\
&=\int_{H_{\xi,\sigma}^s}\varphi_{0,\sigma}(x) e^{-\alpha_1 d_\sigma^{p}(x)}e^{-\epsilon d_{\sigma}^{p}(x)}\varphi_{0,\sigma}^{-1}(x)\,dH_{\xi,\sigma}^s(x).
\end{align*}
With (\ref{1}) and (\ref{2}) we obtain:
\begin{align*}
\dots &\leq \int_{H_{\xi,\sigma}^s} \varphi_{0,\sigma}(x)e^{-\alpha_1 d_{\sigma}^{p}(x)}e^{-\epsilon d_{\sigma}^{p}(x)}e^{-\rho d_\sigma (x)}\,dH_{\xi,\sigma}^s(x)\\
&\leq e^{-\alpha_1 \rvert s\lvert^p}\int_{H_{\xi,\sigma}^s}\varphi_{0,\sigma}(x)e^{-\epsilon d_{\sigma}^{p}(x)} e^{\rho d_{\sigma}(x)}(1+d_{\sigma}(x))(1+d_{\sigma}(x))^{-1} \,dH_{\xi,\sigma}^s(x).
\end{align*}
Since the function $h(y)=e^{-\epsilon y^p+\rho y}(1+y)$ is bounded on $\R$, its supremum $D<\infty$  exists and we get
\begin{align*}
\dots \leq e^{-\alpha_1 \lvert s\rvert ^{p}} D \int_{H_{\xi,\sigma}^s}\varphi_{0,\sigma}(x)(1+d_{\sigma}(x))^{-1} \,dH_{\xi,\sigma}^s(x).
\end{align*}
From 
Lemma \ref{lemma:bounds}
 we have the following: There exists a constant $k>0$ such that 
 \begin{align*}
 \varphi_{0,\sigma}(x)\leq k(1+d_{\sigma}(x))e^{-\rho d_{\sigma}(x)}~\forall x\in X.
 \end{align*}
This 
yields:
\begin{align*}
\dots &\leq e^{-\alpha_1\lvert s\rvert^p} D k\int_{H_{\xi,\sigma}^s} e^{-\rho d_{\sigma}(x)}\, dH_{\xi,\sigma}^s(x).
\end{align*}
Now let $h_s$ denote the distance induced on $H_{\xi,\sigma}^s$ by the ambient space and $\sigma(s)$ the intersection point of the geodesic perpendicular to $H_{\xi,\sigma}^0$ in $\sigma$ and $H_{\xi,\sigma}^s$. By Lemma \ref{lemma:GeoHoro} we have $0<c\leq 1$ such that: 
\begin{align*}
c\cdot h_s(\sigma(s),x)\leq d(\sigma,x)+s.
\end{align*}
Hence, we obtain: 
\begin{align*}
\dots &\leq e^{-\alpha_1\lvert s\rvert^p} D k\int_{H_{\xi,\sigma}^s} e^{-\rho d_{\sigma}(x)}\, dH_{\xi,\sigma}^s(x)\\
 &\leq e^{-\alpha_1\lvert s\rvert^p} D ke^{\rho s }\int_{H_{\xi,\sigma}^s} e^{-\rho \cdot c\cdot h_s(\sigma(s),(x))}\, dH_{\xi,\sigma}^s(x).
\end{align*}
Since the horospheres have polynomial volume growth bounded by a polynomial without constant factor, where the exponent only depends on the mean curvature of the horosphere which is constant and the decay constant of the stable manifold \cite[Section 19]{knieper22016}, the above integral converges and is bounded by a constant independent of $s$ yielding the assertion. 
\end{proof}

\begin{satz}\label{morgan}
Let $(X,g)$ be a non-compact simply connected harmonic manifold with pinched negative curvature. 
Let $f\in L^1(X)\cap L^2 (X)$ such that for some $\sigma\in X$, all $x\in X $, $ \lambda \in \R,\text { and } \xi\in\partial X$ we have
\begin{enumerate}
\item $\lvert f(x) \rvert \leq C e^{-\alpha (d_{\sigma}(x))^p}$,
\item $\lvert \tilde{f}^{\sigma}(\lambda,\xi)\rvert\leq C e^{-\beta \lvert \lambda \rvert^q}$,
\end{enumerate}
where $C,\alpha, \beta$ are positive constants and
$1<p,q<\infty,\text{ with } \frac{1}{p}+\frac{1}{q}=1$.
If $(\alpha p)^{1/p}(\beta q)^{1/q}>1$ then $f=0$ almost everywhere.
\end{satz}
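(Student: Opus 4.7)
The plan is to reduce the statement to the one-dimensional Morgan theorem (Theorem \ref{thm:morgr}) applied fiberwise over $\partial X$ to the Radon transform of $f$. The central identity from Remark \ref{lemma1} is
\begin{align*}
\tilde{f}^{\sigma}(\lambda,\xi) = \mathcal{F}\bigl(\mathcal{R}_{\sigma}(f)(\cdot,\xi)\bigr)(\lambda),
\end{align*}
which automatically transports Helgason Fourier decay on $X$ to Euclidean Fourier decay for $s \mapsto \mathcal{R}_\sigma(f)(s,\xi)$; thus hypothesis (2) of the theorem becomes a classical Fourier-side hypothesis on the real line.

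First, exploiting the strict inequality $(\alpha p)^{1/p}(\beta q)^{1/q} > 1$, I would pick $\alpha_1 < \alpha$ sufficiently close to $\alpha$ so that $(\alpha_1 p)^{1/p}(\beta q)^{1/q} > 1$ still holds. Proposition \ref{prop:boundradon}, applied with this $\alpha_1$, then provides a constant $E>0$ such that
\begin{align*}
|\mathcal{R}_\sigma(f)(s,\xi)| \leq E\, e^{-\alpha_1 |s|^p} \qquad \forall (s,\xi)\in \mathbb{R}\times \partial X,
\end{align*}
while the Fourier--Radon identity combined with hypothesis (2) gives
\begin{align*}
|\mathcal{F}(\mathcal{R}_\sigma(f)(\cdot,\xi))(\lambda)| = |\tilde{f}^\sigma(\lambda,\xi)| \leq C\, e^{-\beta |\lambda|^q}.
\end{align*}

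For each fixed $\xi \in \partial X$, the function $g_\xi(s) := \mathcal{R}_\sigma(f)(s,\xi)$ now satisfies both hypotheses of the classical Morgan theorem on $\mathbb{R}$ (Theorem \ref{thm:morgr} with $n=1$) with constants $\alpha_1$ and $\beta$, and by construction the required product condition is preserved. Morgan's theorem forces $g_\xi \equiv 0$ almost everywhere for every $\xi$, and hence by Fubini the Radon transform $\mathcal{R}_\sigma(f)$ vanishes almost everywhere on $\mathbb{R} \times \partial X$. The injectivity of the Radon transform on $L^1(X)$ established in the lemma immediately following Remark \ref{lemma1} then yields $f=0$ almost everywhere, as desired.

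The main conceptual obstacle is absorbed into Proposition \ref{prop:boundradon}: transferring the Gaussian-type decay of $f$ on $X$ to a corresponding decay of its Radon transform requires comparing horospherical and ambient distances, which is precisely where Lemma \ref{lemma:GeoHoro} and therefore the pinched negative curvature assumption enter the argument. The remaining delicate point is purely budgetary: the reduction costs an arbitrarily small $\epsilon$ in the spatial decay exponent, and one must verify that the strict inequality $(\alpha p)^{1/p}(\beta q)^{1/q} > 1$ leaves enough margin to absorb this loss before invoking the one-dimensional Morgan theorem. Once these two points are handled, the $L^1 \cap L^2$ regularity of $f$ ensures that the Radon transform is well-defined almost everywhere and that the injectivity lemma is applicable, closing the argument.
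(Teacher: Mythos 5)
Your proposal is correct and follows essentially the same route as the paper: shrink $\alpha$ to $\alpha_1$ while preserving the product condition, apply Proposition \ref{prop:boundradon} to get Gaussian-type decay of the Radon transform, use the Fourier--Radon identity to transfer hypothesis (2), invoke the one-dimensional Morgan theorem fiberwise, and conclude via the vanishing of the Radon transform. The only cosmetic difference is that you cite the injectivity lemma for the Radon transform at the end, whereas the paper unrolls that same lemma (Remark \ref{lemma1} plus the Plancherel theorem) explicitly.
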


\begin{proof}

By Lemma \ref{prop:boundradon} for 
$f\in L^1(X)\cap L^2(X)$,  which satisfies the conditions of  Theorem \ref{morgan}, we get 
\begin{align*}
\mathcal{F}(\mathcal{R_{\sigma}}(f))=\tilde{f}^\sigma.
\end{align*}
 Now choose $0<\alpha_1<\alpha$ such hat
\begin{align*}
(\alpha_1 p )^{1/p}(\beta q)^{1/q}>1.
\end{align*}
Then, by the Morgan theorem on the real line, $\mathcal{R}(s,\xi)$ is zero almost everywhere.  
Hence by Remark \ref{lemma1}, $\tilde{f}^\sigma$ is zero almost everywhere. By the Plancherel Theorem on $X$ this implies the vanishing of the $L^2$ norm of $f$ . 
Therefore $f=0$ almost everywhere. 
 
\end{proof}
\section{Schrödinger Equation}\label{schrö}

Let  $\Delta_{\R}=-\frac{\partial^2}{\partial x^2}$  the Laplace operator on $\R$. Consider the initial value problem for the time-dependent damped Schrödinger equation on $\R$
\begin{align*}
(\Delta_{\R}-C)u(t,x)+i\frac{\partial}{\partial t} u(t,x)&=0,\\
u(0,x)&=f(x),
\end{align*}
where $C\in \R$ is the damping parameter. For $f\in L^2(\R)$ there is a unique solution  $u\in C^{0}(\R,L^2(\R))$ to this equation in the sense of distributions. 
See for instance \cite{rauch2012partial}. 
S.Chanillo \cite{schroe} showed a Hardy type uniqueness theorem for the Schrödinger equation on $\R$ stating that if for some constants $a,b,A,B$ and some $t_0>0$ we have
$$ \lvert f(x)\rvert \leq Ae^{ax^2}$$
and $$\lvert u(t_0,x)\rvert \leq B\cdot e^{b x^2}$$
with $16abt_0>1$
then $f=0$ a.e., which implies $u=0$ a.e. 
Using a similar technique as in the case of the Morgan theorem we now want to generalise this to the case of harmonic manifolds with pinched negative curvature. 
First consider the Schrödinger equation on $(X,g)$ $u:\R\times X\to \C$
\begin{align*}
   \Delta u(t,x)&= i\frac{\partial}{\partial t}u(t,x)\\
   u(0,x)&=f(x),
\end{align*}
hence $u(t,x)=e^{it\Delta}f(x)$ and we can obtain $u$ for $f\in L^2(X)$ via convolution with the integral kernel $$s_t(x,y)=C_0\int_X e^{-it(\lambda^2+\rho^2)}\varphi_{\lambda,x}(y)\lvert \mathbf{c}(\lambda)\rvert^{-2}\,d\lambda,$$
where $C_0$ is the constant from the Fourier inversion theorem. 
\begin{satz}\label{thm:schroe} 
Let $X$ be a non-compact simply connected harmonic manifold with pinched negative curvature. 
Suppose that for some $\sigma\in X$ we have constants $\alpha,\beta,A,B>0$ such that for $f:X\to \C$ a measurable function:
$$
\lvert f(x)\rvert\leq A\cdot e^{-\alpha d^2_{\sigma}(x)}$$
for all $x\in X$ 
and
$$\lvert u(t_0,x)\rvert\leq B\cdot e^{-\beta d^2(\sigma,x)}$$
for all $x\in X$ and some $t_0>0$,
where $u$ is a solution of the Schrödinger equation on $X$ with initial condition $f$.

Then  $f\in L^2(X)$ and if $16\alpha\beta t_0^2>1$ then $f=0$ almost everywhere and $u=0$ almost everywhere.
\end{satz}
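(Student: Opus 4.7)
The plan is to mirror the proof of Theorem \ref{morgan}: apply the Radon transform $\mathcal{R}_\sigma$ to both $f$ and $u(t_0,\cdot)$, observe that on each horospherical direction $\xi\in\partial X$ the problem becomes the one-dimensional damped Schr\"odinger equation with damping constant $C=\rho^2$, and then invoke Chanillo's uniqueness theorem on $\R$. Injectivity of the Radon transform (established at the end of Section \ref{s5}) will then force $f=0$ almost everywhere, and hence $u\equiv 0$ almost everywhere.

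Concretely, I would first verify the integrability needed to apply Proposition \ref{prop:boundradon} and the Plancherel theorem. Because pinched negative curvature implies rank one and purely exponential volume growth, the Gaussian-against-exponential estimate
\begin{align*}
\int_X|f|^2\,dx \lesssim \int_0^\infty e^{-2\alpha r^2}e^{2\rho r}\,dr<\infty
\end{align*}
gives $f\in L^2(X)$, and analogously $f\in L^1(X)$; the decay hypothesis on $u(t_0,\cdot)$ gives $u(t_0,\cdot)\in L^1(X)\cap L^2(X)$. Since the Schr\"odinger flow on $X$ acts on the Fourier side by multiplication with $e^{-it(\lambda^2+\rho^2)}$, one has $\widetilde{u(t_0,\cdot)}^\sigma(\lambda,\xi)=e^{-it_0(\lambda^2+\rho^2)}\tilde f^\sigma(\lambda,\xi)$ almost everywhere.

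Next, Proposition \ref{prop:boundradon} (Schr\"odinger version) applied separately to $f$ and to $u(t_0,\cdot)$ yields, for any $\alpha_1<\alpha$ and $\beta_1<\beta$, constants $E_1,E_2>0$ with $|\mathcal{R}_\sigma(f)(s,\xi)|\le E_1 e^{-\alpha_1 s^2}$ and $|\mathcal{R}_\sigma(u(t_0,\cdot))(s,\xi)|\le E_2 e^{-\beta_1 s^2}$. Fix $\xi\in\partial X$ and set $F_\xi(s):=\mathcal{R}_\sigma(f)(s,\xi)$. By the intertwining identity in Remark \ref{lemma1}, $\mathcal{F}(F_\xi)=\tilde f^\sigma(\cdot,\xi)$ and likewise $\mathcal{F}(\mathcal{R}_\sigma(u(t_0,\cdot))(\cdot,\xi))=e^{-it_0(\lambda^2+\rho^2)}\mathcal{F}(F_\xi)$. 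This is precisely the Fourier-side statement that $\mathcal{R}_\sigma(u(t_0,\cdot))(\cdot,\xi)$ is the value at time $t_0$ of the one-dimensional damped Schr\"odinger evolution with $C=\rho^2$ and initial datum $F_\xi$. Since $16\alpha\beta t_0^2>1$, I can pick $\alpha_1,\beta_1$ close enough to $\alpha,\beta$ so that $16\alpha_1\beta_1 t_0^2>1$, and Chanillo's theorem on $\R$ yields $F_\xi=0$ a.e. Doing this for almost every $\xi$ and invoking injectivity of the Radon transform gives $f=0$ almost everywhere.

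The main obstacle is the rigorous identification of $\mathcal{R}_\sigma(u(t_0,\cdot))(\cdot,\xi)$ with the genuine one-dimensional damped Schr\"odinger evolution of $F_\xi$ as a function of $s$, so that Chanillo's pointwise decay hypothesis is verified rather than merely imposed formally on the Fourier side. The intertwining relation $\mathcal{R}_\sigma\circ e^{it\Delta}=e^{-it\rho^2}e^{it\Delta_\R}\circ\mathcal{R}_\sigma$ needs to be justified, first on a dense class (e.g.\ $C_c^\infty(X)$) using the projection-slice identity of Remark \ref{lemma1}, and then extended to $L^1\cap L^2$ by continuity; once this is in place, Proposition \ref{prop:boundradon} supplies the required Gaussian pointwise bound on the Radon transform of $u(t_0,\cdot)$, and the rest of the argument proceeds as sketched.
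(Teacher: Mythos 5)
Your proposal is correct and follows essentially the same route as the paper: reduce via $\mathcal{R}_\sigma$ to the one-dimensional damped Schr\"odinger equation with damping $\rho^2$, obtain the Gaussian bounds on the Radon transforms from Proposition \ref{prop:boundradon}, apply Chanillo's theorem for each $\xi$, and conclude by injectivity of the Radon transform (equivalently Remark \ref{lemma1} plus Plancherel). The intertwining identity you flag as the ``main obstacle'' is exactly the step the paper asserts via $\mathcal{F}(\mathcal{R}_\sigma(g))=\widetilde{g}^{\sigma}$, so your extra care there is a refinement rather than a departure.
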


\begin{proof}
First, observe that the bounds on the functions $f$ and $u_{t_0}$ imply that they are in $L^1(X) \cap L^2(X)$. Choose $0<a<\alpha$ and $0<b<\beta$ such that $16ab t_0^2>1$. Then by Proposition \ref{prop:boundradon}
\begin{align*}
\mathcal{R}_{\sigma}(f)(s,\xi)&\leq E_1 e^{-as^2}\\
\mathcal{R}_{\sigma}(u_{t_0})&\leq E_2 e^{-bs^2}
\end{align*}
for some constants  $E_1, E_2>0$.
Since for $g\in L^1(X)$  $$\mathcal{F}(\mathcal{R}_{\sigma}(g))=\widetilde{g}^{\sigma},$$ we have that for every $\xi\in\partial X$  the Radon transform $\mathcal{R}_{\sigma}(u_{t})(\cdot,\xi)$ solves the damped Schrödinger equation on  $\R$ with initial conditions $ \mathcal{R}_{\sigma}(f)(\cdot,\xi)$ and damping parameter $\rho^2$. Hence, the Hardy theorem on the real line yields $\mathcal{R}_{\sigma}(f)(\cdot,\xi)=0$ a.e. and we conclude, using Remark \ref{lemma1} and the Plancherel theorem that $f=0$ a.e. Hence, $u=0$ a.e.
\end{proof}

\section{Hausdorff-Young inequalety}\label{HY}
The Hausdorff-Young inequality for the real line is quite an elementary result only requiring the Plancherel theorem and interpolation arguments provided by the Riesz-Thorin theorem. In the case of generalising it to rank one harmonic manifolds, we must be careful since we are no longer dealing with a linear operator. 
The following is a generalisation of the result of \cite[Theorem 4.2]{interpolation2}. Note that there is a technical error in their proof concerning exactly the point of caution mentioned above. Still, the authors provided a fix for this in private communications in the form of the references \cite{interpolation1} or \cite{Janson1982-1983}.
\begin{satz}\label{thm:Hausdorf-young}
Let $X$ be a non-compact simply connected harmonic manifold of rank one $\partial X$ the visibility boundary, $\{d\mu_{y}\}_{y\in X}$ the family of visibility measures, $1<p<2$, $q$  Hölder conjugated to $p$, $\sigma\in X$ and $C_0$ the constant making the Fourier transform on $X$ an isometry between $L^2(X)$ and $L^2((0,\infty)\times\partial X,d\mu_{\sigma}(\xi)C_0\lvert \mathbf{c}(\lambda) \rvert^{-2}\,d\lambda)$ spaces. Then
\begin{align*}
\Bigl( C_0 \int_{0}^{\infty}\lVert \tilde{f}^{\sigma}(\lambda,\cdot)\rVert^{q}_{L^1(\partial X,d\mu_{\sigma})}\lvert \mathbf{c}(\lambda)\rvert^{-2}\,d\lambda\Bigr)^{\frac{1}{q}}\leq\lVert f \rVert_p.
\end{align*}
\end{satz}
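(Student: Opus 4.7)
The plan is to apply the Riesz--Thorin interpolation theorem to the linear operator $T: f \mapsto \widetilde{f}^{\sigma}$, viewed as taking values in the Bochner space $L^q\bigl((0,\infty), w(\lambda)\,d\lambda;\, L^1(\partial X, d\mu_\sigma)\bigr)$, where $w(\lambda) := C_0|\mathbf{c}(\lambda)|^{-2}$. The crucial point, which is precisely the pitfall alluded to in the discussion of \cite{interpolation2}, is that $T$ must be kept as a genuine linear operator throughout; one must not apply scalar Riesz--Thorin directly to the sublinear functional $f \mapsto \lVert \widetilde{f}^\sigma(\cdot,\cdot)\rVert_{L^1_\xi}$, which is the source of the technical error referred to in the statement.

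For the endpoint $p=1$, $q=\infty$, Fubini together with representation (\ref{radialeigen}) evaluated at $\lambda = 0$ yields
\begin{align*}
\int_{\partial X}\lvert \widetilde{f}^\sigma(\lambda,\xi)\rvert\,d\mu_\sigma(\xi)
\leq \int_X \lvert f(y)\rvert \int_{\partial X} e^{-\rho B_{\xi,\sigma}(y)}\,d\mu_\sigma(\xi)\,dy
= \int_X \lvert f(y)\rvert \varphi_{0,\sigma}(y)\,dy \leq \lVert f\rVert_1,
\end{align*}
uniformly in $\lambda \in \R$, where I use Lemma \ref{lemma:bounds}(2) to conclude $\varphi_{0,\sigma} \leq 1$. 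For the endpoint $p=2$, $q=2$, since $\mu_\sigma$ is a probability measure, Jensen's inequality gives $\lVert \widetilde{f}^\sigma(\lambda,\cdot)\rVert_{L^1(\partial X,d\mu_\sigma)} \leq \lVert \widetilde{f}^\sigma(\lambda,\cdot)\rVert_{L^2(\partial X,d\mu_\sigma)}$ pointwise, which combined with the Plancherel Theorem \ref{Placherel Theorem} produces
\begin{align*}
\int_0^\infty \lVert \widetilde{f}^\sigma(\lambda,\cdot)\rVert_{L^1(\partial X,d\mu_\sigma)}^2 \, w(\lambda)\,d\lambda \leq \lVert f\rVert_2^2.
\end{align*}

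Setting $\theta = 2/q$ so that $1/p = 1 - \theta/2$ and $1/q = \theta/2$ (and hence $1/p + 1/q = 1$), the two endpoint estimates fit the hypotheses of the vector-valued Riesz--Thorin theorem, with the common target Banach space $L^1(\partial X,d\mu_\sigma)$ at both endpoints (no interpolation of the target is required, since the target is the same). The resulting conclusion,
\begin{align*}
T: L^p(X) \to L^q\bigl((0,\infty), w(\lambda)\,d\lambda;\, L^1(\partial X,d\mu_\sigma)\bigr)
\end{align*}
with operator norm at most $1$, is exactly the asserted inequality.

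The main obstacle is therefore not analytic but structural: one must interpolate a Banach-valued linear operator rather than a scalar sublinear functional, and the standard scalar Riesz--Thorin does not suffice. The Bochner-valued version needed here is the one made available in \cite{interpolation1,Janson1982-1983}; once this is in place, the endpoint bounds above (which together with the Plancherel theorem are the only nontrivial inputs from the geometry of $X$) close the argument.
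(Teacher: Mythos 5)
Your proposal is correct, and the two endpoint estimates are exactly the ones in the paper: the $L^1\to L^\infty$ bound via the representation $\varphi_{0,\sigma}(y)=\int_{\partial X}e^{-\rho B_{\xi,\sigma}(y)}\,d\mu_\sigma(\xi)$ together with $\varphi_{0,\sigma}\leq 1$ from Lemma \ref{lemma:bounds}, and the $L^2\to L^2$ bound via $\lVert\cdot\rVert_{L^1(d\mu_\sigma)}\leq\lVert\cdot\rVert_{L^2(d\mu_\sigma)}$ (Jensen/Cauchy--Schwarz on the probability measure) plus the Plancherel theorem. Where you genuinely diverge from the paper is the interpolation step, which is the delicate point the paper itself flags. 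The paper keeps the \emph{scalar-valued sublinear} operator $Tf=\lVert\tilde f^\sigma(\lambda,\cdot)\rVert_{L^1(\partial X,d\mu_\sigma)}$ and invokes a Riesz--Thorin theorem for sublinear operators from \cite{interpolation1} and \cite{Janson1982-1983}; you instead keep the \emph{linear} operator $f\mapsto\tilde f^\sigma$ and interpolate it as a map into the Bochner spaces $L^{q}((0,\infty),C_0\lvert\mathbf c(\lambda)\rvert^{-2}d\lambda;L^1(\partial X,d\mu_\sigma))$ with the target Banach space fixed at both endpoints. Both routes are legitimate resolutions of the nonlinearity issue: yours buys a reduction to the classical three-lines argument (tested against simple $L^\infty(\partial X)$-valued functions in the dual, since no interpolation of the inner space is needed), at the cost of having to quote or verify the vector-valued Riesz--Thorin theorem; the paper's route keeps everything scalar at the cost of needing the sublinear interpolation theorem. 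One small point worth adding to your write-up: you should note, as the paper does via Lemma \ref{lemma:holo2}, that $\tilde f^\sigma(\lambda,\xi)$ actually exists for $f\in L^p(X)$, $1<p<2$, so that $T$ is defined on all of $L^p$ and not merely on $L^1\cap L^2$; alternatively, a density argument from $L^1\cap L^2$ suffices once the a priori bound is established.
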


\begin{proof}[ Proof of Theorem \ref{thm:Hausdorf-young}]
 By Lemma \ref{lemma:holo2} the Fourier transform $\tilde{f}^{\sigma}(\lambda,\xi)$ exists for almost every $\xi\in\partial X$ and every $\lambda\in\ (0,\infty)$.
 Hence, for $1\leq p'\leq 2$ and $q'$ Hölder conjugated to $p'$,
 \begin{align*}
 T:L^{p'}(X)&\to L^{q'}((0,\infty),C_0\lvert \mathbf{c}(\lambda)\rvert^{-2}\,d\lambda)\\
 f&\mapsto \lVert \tilde{f}^{\sigma}(\lambda,\cdot)\rVert_{L^1(\partial X,d\mu_{\sigma})}
 \end{align*}
 is a well-defined sublinear operator.\\
 Fix $\lambda\in [0,\infty)$. Then:
 \begin{align*}
 \lVert \tilde{f}^{\sigma}(\lambda,\cdot)\rVert_{L^1(\partial X,d\mu_{\sigma})}&=\int_{\partial X} \lvert \tilde{f}^{\sigma}(\lambda,\xi)\rvert\,d\mu_{\sigma}(\xi)\\
 &\leq \int_{\partial X} \Bigl\lvert\int_X f(x)\cdot e^{-i\lambda-\rho B_{\xi,\sigma}(x)}\,dx\Bigr\rvert\,d\mu_{\sigma}(\xi)\\
 &\leq \int_X \lvert f(x)\rvert \int_{\partial X} \lvert e^{-i\lambda-\rho B_{\xi,\sigma}(x)}\rvert\,d\mu_{\sigma}(\xi)\,dx\\
 &\leq \int_X \lvert f(x)\rvert \varphi_{0,\sigma}(x)\,dx\\
 &\leq \int_X \lvert f(x)\rvert\,dx\\
 &=\lVert f\rVert_{1},
 \end{align*}
 where the last inequality is due to Lemma \ref{lemma:bounds}.
 Hence:
 \begin{align*}
 \lVert T\rVert_{L^1(X)\to L^{\infty}((0,\infty),C_0\lvert\mathbf{c}(\lambda)\rvert^{-2}\,d\lambda)}\leq 1.
 \end{align*}
 Furthermore, using the the Plancherel theorem (Theorem \ref{Placherel Theorem}) we have
 \begin{align*}
 \lVert Tf\rVert^2_{L^{2}([0,\infty),C_0\lvert\mathbf{c}(\lambda)\rvert^{-2}\,d\lambda)}&=C_0\int_0^{\infty}\lVert \tilde{f}^{\sigma}(\lambda,\cdot)\rVert^2_{L^1(\partial X,d\mu_{\sigma})}\lvert\mathbf{c}(\lambda)\rvert^{-2}\,d\lambda\\
 &\leq C_0\int_0^{\infty}\lvert \tilde{f}^{\sigma}(\lambda,\xi)\rvert^2\,d\mu_{\sigma}(\xi)\lvert\mathbf{c}(\lambda)\rvert^{-2}\,d\lambda\\
 &=\lVert \tilde{f}^{\sigma}\rVert^2_{2}\\&=\lVert f\rVert^2_{2}. 
 \end{align*}
 Therefore, 
 \begin{align*}
 \lVert T\rVert_{L^2(X)\to L^{2}((0,\infty),C_0\lvert\mathbf{c}(\lambda)\rvert^{-2}\,d\lambda)}\leq 1.
 \end{align*}
 Hence, by the version of the Riesz interpolation theorem for sublinear operators \cite{interpolation1} or \cite{Janson1982-1983}, we get for $1<p<2$
\begin{align*}
 \lVert T\rVert_{L^p(X)\to L^{q}((0,\infty),C_0\lvert\mathbf{c}(\lambda)\rvert^{-2}\,d\lambda)}\leq 1.
 \end{align*}
 This yields the claim. 
\end{proof}

\section{Hömander theorem}\label{s10}
The Hömander theorem: 
\begin{satz}[\cite{MR1150375}]
If a function $f\in L^1(\R)$ satisfies 
$$\int_\R\int_\R \lvert f(x)\rvert \lvert \mathcal{F}(f)(\xi)\rvert e^{\lvert \xi\rvert \lvert x\rvert}\,dx\,d\xi <\infty.$$
Then $f=0$ almost everywhere,

where $\mathcal{F}(f)(\xi)=\int_\R f(x)e^{-i\xi x}\,dx$ is the euclidean Fourier transform. 
\end{satz}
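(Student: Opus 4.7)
The plan is to use Fubini to decouple the coupled weight $e^{\lvert x\rvert\lvert\xi\rvert}$ into separate exponential integrability conditions for $f$ and $\mathcal{F}(f)$, translate each into a holomorphic extension on a strip, and then conclude by a Phragm\'en--Lindel\"of argument (equivalently, by Beurling's uniqueness theorem). The argument proceeds by contradiction, so assume $f\not\equiv 0$.

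First I would apply Fubini to the hypothesis. Setting
\begin{align*}
M(x):=\int_{\R}\lvert\mathcal{F}(f)(\xi)\rvert\, e^{\lvert x\rvert\lvert\xi\rvert}\,d\xi,
\end{align*}
the assumption reads $\int_{\R}\lvert f(x)\rvert M(x)\,dx<\infty$. Since $\lvert x\rvert\mapsto M(x)$ is monotone non-decreasing, the set $\{M<\infty\}$ is a symmetric interval $(-T,T)$ (possibly with $T=\infty$). If $f\not\equiv 0$ then necessarily $T>0$, since otherwise the double integral would diverge on the positive-measure set $\{f\neq 0\}$. Hence $\mathcal{F}(f)(\xi)\,e^{t\lvert\xi\rvert}\in L^1(\R)$ for every $t<T$. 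The analogous argument with the roles of $f$ and $\mathcal{F}(f)$ interchanged, applying Fubini in the other order, gives $f(x)\,e^{s\lvert x\rvert}\in L^1(\R)$ for some $S>0$ and every $s<S$.

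These weighted integrability statements, combined with Fourier inversion, produce holomorphic extensions: $f$ extends to a bounded holomorphic function on the strip $\{\lvert\operatorname{Im}z\rvert<T\}$ via the absolutely convergent integral
\begin{align*}
f(x+iy)=\frac{1}{2\pi}\int_{\R}\mathcal{F}(f)(\xi)\,e^{i\xi x}\,e^{-y\xi}\,d\xi,\qquad \lvert y\rvert<T,
\end{align*}
and $\mathcal{F}(f)$ symmetrically extends to a bounded holomorphic function on the strip $\{\lvert\operatorname{Im}\zeta\rvert<S\}$, with the uniform estimate $\lvert\mathcal{F}(f)(\xi+i\eta)\rvert\leq\int_{\R}\lvert f(x)\rvert\,e^{\lvert x\rvert\lvert\eta\rvert}\,dx$ for $\lvert\eta\rvert<S$.

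The hard step is to draw a contradiction from the coupled $L^1$ bound itself, not merely from the separate exponential decay rates, since the latter are satisfied by the Gaussian and do not force vanishing. The weight $e^{\lvert x\rvert\lvert\xi\rvert}$ is exactly the threshold weight of Beurling's uniqueness theorem, and H\"ormander's original argument is a Phragm\'en--Lindel\"of estimate in the strip. I would apply it to the holomorphic extension $g(\zeta):=\mathcal{F}(f)(\zeta)$ on $\lvert\operatorname{Im}\zeta\rvert<S$: the growth bound supplied by the integral representation above, combined with the $L^1$ control of $\lvert g(\xi)\rvert$ against $\lvert f(x)\rvert\,e^{\lvert x\rvert\lvert\xi\rvert}$ along the real axis, forces $g\equiv 0$; Fourier uniqueness then yields $f=0$ almost everywhere. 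The main obstacle is the sharp calibration of the Phragm\'en--Lindel\"of step: the weight sits exactly at the uniqueness threshold, so no slack is available, and one must invoke the refined maximum principle used by H\"ormander rather than an off-the-shelf maximum-modulus bound.
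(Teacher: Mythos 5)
The paper offers no proof of this statement: it is quoted as a classical result with the bracketed citation \cite{MR1150375} and used as a black box to deduce the harmonic-manifold version, so there is nothing internal to compare your argument against; it has to stand on its own. Your preliminary reductions do stand: the Fubini argument showing that $f\not\equiv 0$ forces $\mathcal{F}(f)(\xi)e^{t\lvert\xi\rvert}\in L^1(\R)$ for some $t>0$ (and symmetrically $f(x)e^{s\lvert x\rvert}\in L^1(\R)$ for some $s>0$), and the resulting holomorphic extensions of $f$ and $\mathcal{F}(f)$ to strips, are correct and standard. (You could even push them further: once $f$ is analytic on a strip and nonzero, its real zeros are discrete, so $M(x)<\infty$ for arbitrarily large $\lvert x\rvert$ and both $f$ and $\mathcal{F}(f)$ are in fact entire with all exponential moments finite.) The decisive step, however, is missing: you assert that the coupled $L^1$ bound ``combined with'' a Phragm\'en--Lindel\"of estimate ``forces $g\equiv 0$,'' and defer to ``the refined maximum principle used by H\"ormander.'' That refined step \emph{is} the theorem, and it is nowhere supplied.

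The gap is fatal rather than cosmetic, and here is a concrete test. Every statement you actually establish --- strip analyticity, local boundedness, weighted integrability of $f$ and $\mathcal{F}(f)$ separately --- is also satisfied by a Gaussian under the weakened hypothesis $\iint\lvert f(x)\rvert\,\lvert\mathcal{F}(f)(\xi)\rvert\,e^{\epsilon\lvert x\rvert\lvert\xi\rvert}\,dx\,d\xi<\infty$ with $0<\epsilon<1$, for which the conclusion is false (take $f(x)=e^{-ax^2}$; the exponent $-ax^2+\epsilon\lvert x\rvert\lvert\xi\rvert-\xi^2/(4a)$ is negative definite precisely when $\epsilon<1$). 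So any valid proof must use the constant $1$ in the weight quantitatively, and nothing in your final step does. In the known proofs (H\"ormander's original argument and its later generalisations) the constant enters through an auxiliary holomorphic function that couples $f$ and $\mathcal{F}(f)$ in one complex variable, e.g. $\Phi(z)=\iint f(x)\,\mathcal{F}(f)(\xi)\,e^{-izx\xi}\,dx\,d\xi$, whose natural domain of holomorphy and boundedness is exactly the closed strip $\lvert\operatorname{Im}z\rvert\le 1$ by the hypothesis; one then evaluates $\Phi$ on the real axis by Fourier inversion (obtaining a dilation pairing of $f$ with itself that cannot vanish identically if $f\neq 0$) and runs the Phragm\'en--Lindel\"of/log-integrability argument on that specific function. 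Without constructing such an object --- or citing the result outright, as the paper does --- the proof does not close.
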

states that a function and its Fourier transform can not be simultaneously very small at infinity. The same statement holds on Damek-Ricci spaces \cite{MR2679708}.
We now want to generalise this to harmonic manifolds of rank one:

\begin{satz}
Let $f\in L^p(X)$, $1\leq p \leq 2$ if for almost every $\xi\in \partial X$: 
\begin{align}\label{eq:hoe}
\int_\R\int_X \lvert f(x)\rvert \lvert \widetilde{f}^{\sigma}(\lambda,\xi)\rvert e^{(\pm \lvert \lambda \rvert -\rho)B_{\xi,\sigma}(x)}\,dx \lvert \mathbf{c} (\lambda)\rvert^{-2}\,d\lambda <\infty 
\end{align}
then $f=0$ almost everywhere. 
\end{satz}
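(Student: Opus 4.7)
The plan is to reduce the statement to the classical H\"ormander theorem on $\R$ via the Radon transform $\mathcal{R}_\sigma$, exploiting the identity $\tilde{f}^{\sigma}(\cdot,\xi)=\mathcal{F}\bigl(\mathcal{R}_\sigma(f)(\cdot,\xi)\bigr)$ from Remark \ref{lemma1}(1). Fix $\xi\in\partial X$ at which the $(\pm)$ hypotheses hold, and write $F_\xi(s):=\mathcal{R}_\sigma(f)(s,\xi)$, $G_\xi(\lambda):=\tilde{f}^\sigma(\lambda,\xi)$, so that $G_\xi=\mathcal{F}(F_\xi)$. The goal is to verify the H\"ormander integrability condition $\int_\R\int_\R |F_\xi(s)||G_\xi(\lambda)|e^{|\lambda||s|}\,ds\,d\lambda<\infty$ for almost every $\xi$, and then to conclude via classical H\"ormander on $\R$ that $F_\xi\equiv 0$.

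First I would apply horospherical coordinates ($\int_X \varphi\,dx=\int_\R\int_{H^s_{\xi,\sigma}}\varphi\,dH^s_{\xi,\sigma}\,ds$) to the function $|f|\cdot e^{(\pm|\lambda|-\rho)B_{\xi,\sigma}}$ to obtain the identity
$$\int_X |f(x)|\,e^{(\pm|\lambda|-\rho)B_{\xi,\sigma}(x)}\,dx=\int_\R e^{\pm|\lambda|s}\,\mathcal{R}_\sigma(|f|)(s,\xi)\,ds.$$
Combining the pointwise bound $|F_\xi(s)|\le\mathcal{R}_\sigma(|f|)(s,\xi)$ with the elementary inequality $e^{|\lambda||s|}\le e^{|\lambda|s}+e^{-|\lambda|s}$ then yields
$$\int_\R |F_\xi(s)|\,e^{|\lambda||s|}\,ds\;\le\;\sum_{\epsilon=\pm}\int_X|f(x)|\,e^{(\epsilon|\lambda|-\rho)B_{\xi,\sigma}(x)}\,dx.$$
Multiplying by $|G_\xi(\lambda)|$ and integrating in $\lambda$ against $|\mathbf{c}(\lambda)|^{-2}d\lambda$, the right-hand side is finite for a.e.\ $\xi$ by hypothesis \eqref{eq:hoe}.

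The main obstacle is that this only gives finiteness of the \emph{weighted} H\"ormander integral $\int_\R\int_\R|F_\xi(s)||G_\xi(\lambda)|e^{|\lambda||s|}|\mathbf{c}(\lambda)|^{-2}\,ds\,d\lambda$, whereas the classical H\"ormander theorem on $\R$ asks for the unweighted version. I would remove the weight by splitting the $\lambda$-integral at $|\lambda|=1$. On $\{|\lambda|\ge 1\}$ the asymptotics \eqref{cfunction} give $|\mathbf{c}(\lambda)|^{-2}\gtrsim 1$ (for $n\ge 2$), so the weighted integral already dominates the unweighted one. On $\{|\lambda|\le 1\}$ I would use $e^{|\lambda||s|}\le e^{|s|}$ together with the holomorphy of $G_\xi$ in the strip from Lemma \ref{lemma:holo2} (which yields $\int_{|\lambda|\le 1}|G_\xi(\lambda)|\,d\lambda<\infty$ for a.e.\ $\xi$) and the monotonicity of $\lambda\mapsto\Phi_{\pm}(\lambda):=\int_X|f(x)|e^{(\pm|\lambda|-\rho)B_{\xi,\sigma}(x)}\,dx$ in $|\lambda|$, to bound $\int_\R|F_\xi(s)|e^{|s|}\,ds\le\Phi_+(1)+\Phi_-(1)$; the latter is finite since the integrand of \eqref{eq:hoe} must be finite for almost every $\lambda$ (on the exceptional case when $G_\xi$ vanishes on a positive measure subset of $\{|\lambda|\ge 1\}$, the identity theorem applied on the holomorphy strip forces $G_\xi\equiv 0$ and the conclusion is immediate). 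Once $F_\xi\equiv 0$ a.e.\ for a.e.\ $\xi$, hence $\mathcal{R}_\sigma(f)=0$ a.e., the injectivity lemma of Section \ref{s5} handles the case $f\in L^1(X)$; for general $1\le p\le 2$ I would convolve with the heat kernel (Lemma \ref{heat2}) so that $\widetilde{f*h_t}^\sigma=e^{-t(\lambda^2+\rho^2)}G_\xi\equiv 0$, apply the Plancherel Theorem \ref{Placherel Theorem} to deduce $f*h_t=0\in L^2(X)$, and let $t\to 0$.
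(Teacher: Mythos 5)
Your proposal follows the same route as the paper: pass to horospherical coordinates, identify $\tilde f^\sigma(\cdot,\xi)$ with the Euclidean Fourier transform of $\mathcal R_\sigma(f)(\cdot,\xi)$, verify the classical H\"ormander hypothesis on $\R$ for a.e.\ $\xi$, and conclude via injectivity of the Radon/Fourier transform. The one place you genuinely diverge is the removal of the Plancherel weight $\lvert\mathbf c(\lambda)\rvert^{-2}$: the paper disposes of this with the single remark that ``all functions are continuous in $\lambda$,'' which does not by itself address the region near $\lambda=0$, where by \eqref{cfunction} the weight vanishes like $\lvert\lambda\rvert^{2}$ and the weighted integral therefore does \emph{not} dominate the unweighted one. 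Your split at $\lvert\lambda\rvert=1$, with the product bound $\bigl(\int_{\lvert\lambda\rvert\le1}\lvert G_\xi\rvert\,d\lambda\bigr)\cdot\int_\R\lvert F_\xi(s)\rvert e^{\lvert s\rvert}\,ds$ on the inner region, is exactly the argument needed to make this step rigorous, so your version is tighter than the paper's here. Two small repairs: the individual maps $\lvert\lambda\rvert\mapsto\Phi_\pm(\lvert\lambda\rvert)$ are not monotone (one factor grows on $\{B_{\xi,\sigma}\ge0\}$ and decays on its complement), but the sum $\Phi_++\Phi_-=\int_\R 2\cosh(\lvert\lambda\rvert s)\,\mathcal R_\sigma(\lvert f\rvert)(s,\xi)\,ds$ is monotone in $\lvert\lambda\rvert$, which is all you use; and for $p=2$ Lemma \ref{lemma:holo2} gives an empty strip, so local integrability of $G_\xi$ should instead be read off from the bound $\lvert G_\xi(\lambda)\rvert\le\int_X\lvert f\rvert e^{-\rho B_{\xi,\sigma}}\,dx\le\Phi_+(1)+\Phi_-(1)<\infty$, which you already have. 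The paper additionally records $f\in L^1(X)$ up front via the splitting $X=B_1\cup B_2$; your Tonelli-based formulation plus the heat-kernel argument for general $p$ makes this explicit step unnecessary, at the cost of a slightly longer endgame.
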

\begin{proof}
First we have to show that $f\in L^1(X)$: By (\ref{eq:hoe})
$$\int_X \lvert f(x)\rvert e^{(\pm\lambda -\rho)B_{\xi,\sigma}(x)}\,dx<\infty,$$
for almost every $\lambda\in \R$ and $\xi\in \partial X$. 
We can split $X=B_1\cup B_2$, 
where $B_1:=\{x\in X\mid B_{\xi,\sigma}(x)\geq 0\}$ and $B_2=\{x\in X\mid B_{\xi,\sigma}(x)<0\}. $
Choose $\lambda_1$ such that $\lvert \lambda_1 \rvert -\rho >0$ and $\lambda_2$ such that $\lvert \lambda_2\rvert -\rho <0$ then: 
\begin{align*}
\int_X\lvert f(x)\rvert \,dx \leq & \int_{B_1}\lvert f(x)\rvert \,dx +\int_{B_2}\lvert f(x)\rvert \,dx \\
\leq & \int_{B_1}\lvert f(x)\rvert e^{(\lvert \lambda_1\lvert-\rho)B_{\xi,\sigma}(x)}\,dx +\int_{B_2}\lvert f(x)\rvert e^{(\lvert \lambda_2\rvert-\rho)B_{\xi,\sigma}(x)}\,dx\\
<&\infty
\end{align*}
Now we want to deduce the theorem from the classical Hördmander theorem. 
Using horospherical coordinates: 
\begin{align*}
\infty &>\int_{\R}\int_X \lvert f(x)\rvert \lvert \widetilde{f}^{\sigma}(\lambda,\xi)\rvert e^{(\pm \lvert \lambda\rvert -\rho)B_{\xi,\sigma}(x)}\,dx \lvert \mathbf{c}(\lambda)\rvert^{-2}\,d\lambda\\ &=\int_{\R}\int_{\R}\int_{H^s_{\xi,\sigma}} \lvert f(x)\rvert \lvert \widetilde{f}^{\sigma}(\lambda,\xi)\rvert e^{(\pm\lvert \lambda\rvert -\rho)s}\,dx\,ds \lvert \mathbf{c}(\lambda)\rvert^{-2}\,d\lambda \\
&=\int_{\R}\int_{\R}\mathcal{R}(\lvert f\rvert)(s,\xi)\lvert \widetilde{f}^{\sigma}(s,\xi)e^{\pm \lvert \lambda \rvert s}\,ds \lvert \mathbf{c}(\lambda)\rvert^{-2}\,d\lambda.
\end{align*}
Since changing the direction of the ray flips the sing in $\lambda$ and $s$ we obtain:
$$
\int_{\R}\int_{\R}\lvert \mathcal{R}( f)(s,\xi)\rvert \lvert \widetilde{f}^{\sigma}(s,\xi)e^{\lvert \lambda \rvert \lvert s\rvert }\,ds \lvert \mathbf{c}(\lambda)\rvert^{-2}\,d\lambda<\infty.
$$
We conclude since all functions are continuous in $\lambda$: 

$$\int_{\R}\int_{\R}\lvert \mathcal{R}( f)(s,\xi)\rvert \lvert \widetilde{f}^{\sigma}(s,\xi)\rvert e^{\lvert \lambda \rvert \lvert s\rvert }\,ds \,d\lambda<\infty.$$
Hence the claim follows from the classical Hörmander theorem since the Helgason Fourier transform is the Euclidean Fourier transform of the Radon transform. 
\end{proof}

\begin{folg}
Let $f\in L^p(X)$ $1\leq p<2$ if

$$ \int_{\R}\int_X \lvert f(x)\rvert \lvert \widetilde{f}^{\sigma}(\lambda,\xi)\rvert e^{(\lvert \lambda \rvert -\rho)d(\sigma,x)}\,dx  \lvert \mathbf{c}(\lambda)\rvert^{-2}\,d\lambda< \infty, $$
for almost every $\xi\in \partial X$. Then $f=0$ almost everywhere. 

\end{folg}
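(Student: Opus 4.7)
The plan is to reduce the corollary to the preceding Hörmander-type theorem. The key geometric input is the $1$-Lipschitz property of the Busemann function together with $B_{\xi,\sigma}(\sigma)=0$, which gives the pointwise bound $|B_{\xi,\sigma}(x)|\leq d(\sigma,x)$ for every $x\in X$.

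First I would extract super-exponential decay of $f$ from the hypothesis. By Lemma \ref{lemma:holo2} the map $\lambda\mapsto\tilde f^{\sigma}(\lambda,\xi)$ is holomorphic on a strip about the real axis, so either $\tilde f^{\sigma}(\cdot,\xi)$ vanishes identically on $\R$---in which case $f=0$ a.e.\ by the Plancherel theorem and we are done---or its real zero set is discrete. Working in the latter situation, for any prescribed $N>0$ one can select $\lambda_{0}\in\R$ with $|\lambda_{0}|\geq N+\rho$, with $\tilde f^{\sigma}(\lambda_{0},\xi)\neq 0$, and with the inner $x$-integral in the hypothesis finite; by Fubini the set of such $\lambda_{0}$ has full Lebesgue measure. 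Dividing through by $|\tilde f^{\sigma}(\lambda_{0},\xi)|$ yields $|f|\,e^{Nd(\sigma,\cdot)}\in L^{1}(X)$, and hence $|f|$ decays faster than any exponential; in particular $f\in L^{1}(X)$.

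Next I would verify both $\pm$-sign instances of the theorem's hypothesis. On the high-frequency range $\{|\lambda|\geq\rho\}$, the coefficient $|\lambda|-\rho$ is nonnegative, so $B_{\xi,\sigma}(x)\leq d(\sigma,x)$ immediately gives $e^{(|\lambda|-\rho)B_{\xi,\sigma}(x)}\leq e^{(|\lambda|-\rho)d(\sigma,x)}$, dominating the $+$-sign Hörmander integrand by the corollary's. The remaining contributions---the $-$-sign on $|\lambda|\geq\rho$ and both signs on the bounded range $|\lambda|<\rho$---are treated via the crude inequality $e^{(\pm|\lambda|-\rho)B_{\xi,\sigma}(x)}\leq e^{(|\lambda|+\rho)d(\sigma,x)}$, which is absorbed by the super-exponential decay of $f$ from the first step together with the uniform bound $|\tilde f^{\sigma}(\lambda,\xi)|\leq\|f\,e^{\rho d(\sigma,\cdot)}\|_{1}$ and the polynomial growth $|\mathbf{c}(\lambda)|^{-2}\asymp|\lambda|^{2\alpha+1}$ of the Plancherel density. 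Once both sign-instances of the theorem's hypothesis are verified, the theorem applies and yields $f=0$ almost everywhere.

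The main obstacle I anticipate is the quantitative control of $\|f\,e^{Md(\sigma,\cdot)}\|_{1}$ as $M\to\infty$, which is needed to make the $-$-sign contribution on $|\lambda|\geq\rho$ converge after being weighted by the polynomial $|\mathbf{c}(\lambda)|^{-2}$. This is addressed by choosing the auxiliary frequencies $\lambda_{0}=\lambda$ from Step 1 along a subsequence on which the hypothesis's $\lambda$-slice $\lambda\mapsto\int_{X}|f(x)||\tilde f^{\sigma}(\lambda,\xi)|\,e^{(|\lambda|-\rho)d(\sigma,x)}\,dx\,|\mathbf{c}(\lambda)|^{-2}$ is small, thereby trading off the growth of the exponential weight against the pointwise decay extracted from the finiteness of the hypothesis integral.
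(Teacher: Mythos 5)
Your route is genuinely different from the paper's: the paper disposes of the corollary in two lines via the pointwise chain $e^{(\pm\lvert\lambda\rvert-\rho)B_{\xi,\sigma}(x)}\leq e^{(\lvert\lambda\rvert-\rho)\lvert B_{\xi,\sigma}(x)\rvert}\leq e^{(\lvert\lambda\rvert-\rho)d(\sigma,x)}$, whereas you first extract super-exponential decay of $f$ and then verify the two sign-instances of (\ref{eq:hoe}) region by region. Your Step 1 is correct and is a real contribution: holomorphy of $\lambda\mapsto\tilde f^{\sigma}(\lambda,\xi)$ on a nontrivial strip (Lemma \ref{lemma:holo2}, which is where $p<2$ enters), the resulting dichotomy, and Fubini together give $\lVert f e^{N d_\sigma}\rVert_1<\infty$ for every $N$. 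This correctly handles the bounded range $\lvert\lambda\rvert<\rho$ for both signs — a range where the paper's own second inequality actually reverses, since $\lvert B\rvert\leq d$ multiplied by the negative coefficient $\lvert\lambda\rvert-\rho$ flips — and the $+$-sign on $\lvert\lambda\rvert\geq\rho$ is indeed a direct domination.

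There is, however, a genuine gap in the $-$-sign contribution on $\lvert\lambda\rvert\geq\rho$, and your proposed fix does not close it. On the half-space $B_{\xi,\sigma}(x)<0$ the best pointwise bound is $e^{(-\lvert\lambda\rvert-\rho)B_{\xi,\sigma}(x)}\leq e^{(\lvert\lambda\rvert+\rho)d(\sigma,x)}$, so you need
\begin{align*}
\int_{\lvert\lambda\rvert\geq\rho}\lvert\tilde f^{\sigma}(\lambda,\xi)\rvert\,\bigl\lVert f e^{(\lvert\lambda\rvert+\rho)d_\sigma}\bigr\rVert_1\,\lvert\mathbf{c}(\lambda)\rvert^{-2}\,d\lambda<\infty,
\end{align*}
while the hypothesis controls the same expression with $\lvert\lambda\rvert-\rho$ in the exponent. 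The only way to extract $\lVert fe^{(\lvert\lambda\rvert-\rho)d_\sigma}\rVert_1$ from a $\lambda$-slice of the hypothesis is to divide by $\lvert\tilde f^{\sigma}(\lambda,\xi)\rvert$, which can be arbitrarily small; picking a subsequence where the slice is small only makes that division worse, and the ratio $\lVert fe^{(M+2\rho)d_\sigma}\rVert_1/\lVert fe^{Md_\sigma}\rVert_1$ is unbounded in $M$ unless $f$ is essentially compactly supported. So Step 2 does not go through for that region as written. For what it is worth, the paper's own first inequality $(-\lvert\lambda\rvert-\rho)B\leq(\lvert\lambda\rvert-\rho)\lvert B\rvert$ is likewise false whenever $B<0$, so the difficulty you isolated is real and is not actually resolved by the paper's argument either; closing it seems to require a different mechanism (for instance exploiting that, once Step 1 is in place, $\tilde f^{\sigma}(\cdot,\xi)$ extends to an entire function, and running a Hörmander/Phragm\'en--Lindel\"of argument directly on the Radon transform) rather than a pointwise domination of the integrand of (\ref{eq:hoe}).
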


\begin{proof}
We only need to show that this already implies the condition of the theorem above. 
But we have $\lvert B_{\xi,\sigma}(x)\rvert \leq d(\sigma ,x)$, hence
$$ e^{(\pm\lvert \lambda \rvert -\rho)B_{\xi,\sigma}(x)}\leq e^{(\lvert \lambda \rvert -\rho)\lvert B_{\xi,\sigma}(x)\rvert}\leq e^{(\lvert \lambda \rvert -\rho)d(\sigma,x)}$$

yielding the claim. 
\end{proof}


\footnotesize
\bibliography{literature}
\bibliographystyle{alpha} 


\end{document}